\newtheorem{theorem}{Theorem}[section]
\newtheorem{proposition}[theorem]{Proposition}
\newtheorem{corollary}[theorem]{Corollary}
\newtheorem{remark}[theorem]{Remark}
\begin{document}

%%%%%%%%%%%%%%%%%%%%%%%%%%%%%%%%%%%%%%%%%%%%%5

\title[Narrowing the gaps of the missing Blaschke-Santal\'o diagrams]{Narrowing the gaps of the missing Blaschke-Santal\'o diagrams}

\author[B. Gonz\'alez Merino]{Bernardo Gonz\'alez Merino}
\address{\'Area de Matem\'atica Aplicada, Departamento de Ingenier\'ia y Tecnolog\'ia de Computadores, Facultad de Inform\'atica, Universidad de Murcia, 30100-Murcia, Spain}\email{bgmerino@um.es}

\thanks{2020 Mathematics Subject Classification. Primary 52A20; Secondary 52A21, 52A40.
Partially supported by MICINN Project PID2022-136320NB-I00 Spain.}

\date{\today}\maketitle

\begin{abstract}
We solve several new sharp inequalities relating three quantities amongst the area, perimeter, inradius, circumradius, diameter, and minimal width of planar convex bodies. As a consequence, we narrow the missing gaps in each of the missing planar Blaschke-Santal\'o diagrams. Furthermore, we extend some of those sharp inequalities into higher dimensions, by replacing either the perimeter by the mean width or the area by the volume.
\end{abstract}
\date{\today}\maketitle

\section{Introduction}

In 1916 Blaschke \cite{Blaschke} raised the question of determining the set of triples given by the volume $V(K)$, surface area $S(K)$, and integral mean curvature $M(K)$ of every set $K$ from the set of $3$-dimensional convex bodies $\mathcal K^3$. To do so, he defined the function
\[
h:\mathcal K^3\rightarrow[0,1]^2\quad\text{where}\quad h(K):=\left(\frac{4\pi S(K)}{M(K)^2},\frac{48\pi^2V(K)}{M(K)^3}\right),
\]
and studied the set of images $h(\mathcal K^3)$ of $h$, which is known as the Blaschke diagram. Despite recent efforts (see \cite{MM} or \cite{SY}) this diagram still today remains unsolved. 

Years later, Santal\'o \cite{Santalo} proposed the study of possible triples for the geometric quantities area $A$, perimeter $p$, inradius $r$, circumradius $R$, diameter $D$, and minimum width $w$ of a $2$-dimensional convex body. As a result, he already solved the cases $(A,p,w)$, $(A,p,r)$, $(A,p,R)$, $(A,D,w)$, $(p,D,w)$, and $(R,r,D)$. Later on Hern\'andez Cifre and Segura Gomis in \cite{HC} and \cite{HCSG} concluded the quest to describe the diagrams $(R,r,w)$, $(R,D,w)$, $(r,D,w)$,  Hern\'andez Cifre completely described in \cite{HC2} the diagrams $(A, D, R)$ and $(p, D, R)$, B\"or\"oczky Jr., Hern\'andez Cifre, and Salinas described the diagrams $(A, r, R)$ and $(p, r, R)$ in \cite{BHCS}, and very recently Delyon, Henrot, and Privat completely described in \cite{DHP} the diagram $(A,r,D)$.

Those functionals have been also considered in other diagrams. For instance, it has been described the case $(R,r,D,w)$ in \cite{BrGo17} as well as an almost complete description by Ting and Keller of $(A,p,D,w)$ in \cite{TK}. Furthermore, Hern\'andez Cifre et al. described in \cite{HCPSSG} diagrams for the inradius, circumradius, and volume and surface area for $n$-dimensional centrally symmetric convex bodies. In \cite{BrGo22} the authors described detailed $(R,r,D)$ diagrams when the functionals are measured with respect to other balls different from the Euclidean ball, such as the triangle, square, pentagon, or hexagon, or a general convex body. Very recently in \cite{BrRu} the authors studied $(R,r,D)$ diagrams for other notions of diameters.

More diagrams have been recently studied when introducing other functionals. Ftouhi and Lamboley \cite{FtLa} have described the diagram for the area, perimeter, and the first Dirichlet eigenvalue, Lucardesi and Zucco \cite{LuZu} described the diagram for the area, the torsional ridigity, and the first Dirichlet eigenvalue (see also Buttazzo and Pratelli \cite{BuPr} for more information on this diagram), Ftouhi and Henrot \cite{FtHe} studied the diagram for the area, the first Dirichtel eigenvalue, and the first non-trivial Laplacian-Neumann eigenvalue, and also Ftouhi \cite{Ft} computed the diagram of the area, perimeter, and the Cheeger constant. Very recently Gastaldello, Henrot, and Lucardesi in \cite{GHL} have studied the diagram of area, perimeter, and moment of inertia.

The aim of this paper is to solve several new inequalities relating three of the six functionals proposed by Santal\'o, and to clarify what is known up to date in the matter. This is why we state and prove each new result in the corresponding section. Indeed, we devote a section to each of the six unsolved diagrams, with the exception of $(A,r,w)$ and $(p,r,w)$ which use a common one.

\section{Previous inequalities}

Let $\mathcal K^n$ be the set of \emph{$n$-dimensional convex bodies}, i.e., convex and compact sets in $\mathbb R^n$ different from points.

For every $x,y\in\mathbb R^n$, $x=(x_1,x_2,\dots,x_n)$, $y=(y_1,y_2,\dots,y_n)$, let $\langle x,y\rangle:=x_1y_1+x_2y_2+\cdots+x_ny_n$ be the \emph{scalar product} of $x$ and $y$. Moreover, let $\|x\|:=\sqrt{\langle x,x\rangle}$ be the \emph{Euclidean norm} of $x$. Furthermore, let $e_1,e_2,\dots,e_n$ be the vectors of the \emph{canonical base} of $\mathbb R^n$.

Let $K\in\mathcal K^n$. The \emph{circumradius} (resp. \emph{inradius}) $R(K)$ (resp. $r(K)$) of $K$ is the smallest (resp. largest) radius of an Euclidean ball containing $K$ (resp. contained in $K$). The \emph{diameter} $D(K)$ of $K$ is the largest Euclidean length between two points contained in $K$. The \emph{(minimum) width} is the minimum distance between two parallel hyperplanes containing $K$ between them. For every $K\in\mathcal K^2$, let $p(K)$ be the \emph{perimeter} of $K$. For every $K\in\mathcal K^n$, let $V(K)$ be its \emph{volume} or \emph{Lebesgue measure}, and if $n=2$, realize that $V$ coincides with the usual \emph{area} $A$. 

These functionals are monotonically increasing with respect to inclusion, i.e., if $K,C\in\mathcal K^n$, then $f(K) \leq f(C)$ for $f\in\{R,r,D,w,V\}$ and if $n=2$ then $f(K) \leq f(C)$ for $f=p$ too. Moreover, if $f\in\{p,R,r,D,w\}$ then $f$ is $1$-homogeneous, i.e. $f(\lambda K)=\lambda f(K)$ and $\lambda\geq 0$. In the case of the volume, it is $n$-homogenous, i.e. $V(\lambda K)=\lambda^n V(K)$, for every $K\in\mathcal K^n$ and $\lambda\geq 0$, and thus $A$ is $2$-homogeneous.

Some remarkable examples of planar convex bodies are the \emph{Euclidean unit ball} $\mathbb B$, the \emph{equilateral triangle} of unit circumradius $\mathbb T$, the \emph{line segment} of edge length $2$ $\mathbb L$, and the \emph{Reuleaux triangle} of unit circumradius $\mathbb{RT}$. A body $K\in\mathcal K^2$ is of \emph{constant width} if $w(K)=D(K)$ (see \cite{MMO} for further details). In $n$-dimensional space $\mathbb R^n$, let $\mathbb B_n$ be the \emph{$n$-dimensional Euclidean unit ball}, thus having that $\mathbb B_2=\mathbb B$.

Since this is eminently a paper devoted to planar sets, when we refer to some result, we will directly specify its $2$-dimensional version. Moreover, for the references to most of the known inequalities we will refer to \cite{AS} as well as the references therein. However, we state here the inequalities that we need further on.

%%%% area

Let $K\in\mathcal K^2$. The isoperimetric inequality (see \cite{Oss}) states that
\begin{equation}\label{eq:isoperimetric}
4\pi A(K) \leq p(K)^2,
\end{equation}
with equality if and only if $K=\mathbb B$.

P\'al's inequality (see \cite{Pal}) provides a lower bound of $A$ in terms of $w$, namely
\begin{equation}\label{eq:Palineq}
 w(K)^2 \leq \sqrt{3} A(K),
\end{equation}
with equality if and only if $K=\mathbb T$. If $K$ is moreover assumed to be of constant width, then the inequality strengthens onto
\begin{equation}\label{eq:BLineq}
(\pi-\sqrt{3}) w(K)^2\leq 2 A(K),
\end{equation}
known as the Blaschke-Lebesgue inequality (see \cite{Bl15} or \cite{Le14}), with equality if and only if $K=\mathbb{RT}$.

Further trivial inequalities between the area and the circumradius and inradius are
\[
\pi r(K)^2 \leq A(K) \leq \pi R(K)^2,
\]
with equality if and only if $K=\mathbb B$.

%%%%%perimeter 

The perimeter and the diameter relate as
\begin{equation}\label{eq:pD}
2D(K) \leq p(K) \leq \pi D(K),
\end{equation}
where the left inequality is trivial and gets equal when $K=\mathbb L$, and the right one is a consequence of Barbier \cite{Bar} and it becomes equality when $K$ is a constant width set.

The perimeter and the inradius fulfill the trivial inequality $2\pi r(K) \leq p(K)$ with equality for $K=\mathbb B$. The perimeter and the circumradius fulfill
\begin{equation}\label{eq:pR}
4R(K) \leq p(K) \leq 2\pi R(K),
\end{equation}
where the left inequality attains equality when $K=\mathbb L$, and the right inequality gets equal when $K=\mathbb B$.

The relation between the perimeter and the width is described by
\begin{equation}\label{eq:pw}
\pi w(K) \leq p(K)
\end{equation}
as a consequence of Cauchy's formula (see \cite{Schn}), with equality if and only if $K$ is of constant width.

In \cite{BHCS} it was proven the optimal lower bound for the perimeter in terms of the inradius and the circumradius, namely:
\begin{equation}\label{eq:LW_pRr}
    p(K) \geq 4\left(\sqrt{R(K)^2-r(K)^2}+r(K)\arcsin\left(\frac{r(K)}{R(K)}\right)\right).
\end{equation}
Moreover, equality holds if and only if $K$ is the convex hull of a point, its mirrored with the origin, and an euclidean ball centered at the origin.

In \cite{K} the author derived the optimal upper bound of the perimeter in terms of the diameter and the width
\begin{equation}\label{eq:UB_pDw_2}
p(K) \leq 2\sqrt{D(K)^2-w(K)^2}+2D(K)\arcsin(\frac{w(K)}{D(K)}).    
\end{equation}
Moreover, equality holds for the intersection of the Euclidean ball centered at the origin with a halfspace and its mirrored with respect to the origin. 

%%%%radii

The quotient between the circumradius and the diameter is described by Jung's inequality \cite{Jung}
\begin{equation}\label{eq:jung}
\sqrt{3} R(K) \leq D(K),
\end{equation}
with equality if and only if $K$ contains an equilateral triangle of circumradius $R(K)$ and diameter $D(K)$.

The relation between the inradius and the width is described by 
\begin{equation}\label{eq:steinhagen}
2r(K) \leq w(K) \leq 3r(K),
\end{equation}
where the left side attains equality if and only if $K=\mathbb B$ and the right side, known as Steinhagen's inequality \cite{Steinh}, becomes equality if and only if $K=\mathbb T$.

The relation between the circumradius and the width is described by
\[
w(K) \leq 2R(K),
\]
with equality if and only if $K=\mathbb B$.

Finally, if $K$ is of constant width, it is known that $2 R(K) \leq (\sqrt{3}+1) r(K)$ with equality if and only if $K=\mathbb{RT}$. This last statement is a consequence of \eqref{eq:jung} and the fact that constant width sets attain equality in the concentricity inequalities 
\begin{equation}\label{eq:concentricities}
w(K) \leq r(K)+R(K)\leq D(K)
\end{equation}
(see \cite{BrGo17_2} for more information on these inequalities).
Moreover, precisely in the case of constant width sets, one can obtain the sharpened Steinhagen's inequality 
\begin{equation}\label{eq:SteinhagenCW}
2 w(K) \leq (3+\sqrt{3})r(K)
\end{equation}
with equality if and only if $K=\mathbb{RT}$.

Let $K\in\mathcal K^n$. Given a hyperplane $L\subset\mathbb R^n$, we say that $K'$ is the Steiner symmetrization of $K$ with respect to $L$ if for every $x\in L$, $(x+L^\bot)\cap K'$ is a segment centered around $x$, and it has the same length than $(x+L^\bot)\cap K$. It is well known that $D(K')\leq D(K)$, $R(K') \leq R(K)$, $r(K)\leq r(K')$, $V(K')=V(K)$, and if $n=2$ then $A(K')=A(K)$ and $p(K') \leq p(K)$ (see \cite{Schn}).

For every $X\subset\mathbb R^n$, let $\mathrm{aff}(X)$, $\mathrm{lin}(X)$, and $\mathrm{conv}(X)$ be the affine, linear, and convex hull of $X$, respectively. Given $x,y\in\mathbb R^n$, we let $[x,y]=\mathrm{conv}(\{x,y\})$ be the line segment with endpoints $x$ and $y$. Let $\partial K$ to be the boundary of $K$.

The circumball is characterized by some touching conditions of boundary points (see for instance \cite{BrKo}).

\begin{proposition}\label{prop:optcont}
    Let $K\in\mathcal K^n$ with $K\subset \mathbb B_n$. The following are equivalent:
    \begin{enumerate}
        \item $R(K)=1$.
        \item There exist $x_i\in\partial K\cap\partial\mathbb B_n$, $i=1,\dots,j$, $j\in\{2,\dots,n+1\}$, fulfilling the property that $0\in\mathrm{conv}(\{x_1,\dots,x_j\})$.
    \end{enumerate}
\end{proposition}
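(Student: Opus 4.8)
The plan is to prove the two implications separately, using elementary convexity together with a perturbation of the circumcenter.

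For $(2)\Rightarrow(1)$, I would first observe that $K\subset\mathbb{B}_n$ already gives $R(K)\le 1$, so it suffices to rule out $R(K)<1$. Assume to the contrary that there is a smaller enclosing ball $c+\rho\,\mathbb{B}_n$ with $\rho<1$. If $c=0$ this is immediately impossible, since each contact point $x_i$ has $\|x_i\|=1>\rho$ and hence $x_i\notin c+\rho\,\mathbb{B}_n$. If $c\neq 0$, then from $\|x_i-c\|^2\le\rho^2<1=\|x_i\|^2$ I would expand to obtain $\langle x_i,c\rangle>\|c\|^2/2>0$ for every $i$. Writing $0=\sum_i\lambda_i x_i$ as the given convex combination and pairing with $c$ then yields $0=\langle 0,c\rangle=\sum_i\lambda_i\langle x_i,c\rangle>0$, a contradiction. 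Hence $R(K)=1$.

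For $(1)\Rightarrow(2)$, set $P:=\partial K\cap\partial\mathbb{B}_n$, the contact set, which is compact and nonempty (if it were empty the enclosing ball could be shrunk). The heart of the argument is the claim that $0\in\mathrm{conv}(P)$, which I would prove by contradiction. If $0\notin\mathrm{conv}(P)$, then since $\mathrm{conv}(P)$ is compact and convex there exist a direction $u$ and $\delta>0$ with $\langle x,u\rangle\ge\delta$ for all $x\in P$, and by continuity $\langle x,u\rangle\ge\delta/2$ on a neighborhood $U$ of $P$ in $\partial K$. I would then move the center to $\varepsilon u$ and estimate, for $x$ on the boundary, $\|x-\varepsilon u\|^2=\|x\|^2-2\varepsilon\langle x,u\rangle+\varepsilon^2\|u\|^2$, splitting into $U$ and its complement. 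On $U$ the term linear in $\varepsilon$ is negative, while on $\partial K\setminus U$ one has $\|x\|\le 1-\eta$ for some $\eta>0$ by compactness, leaving room to absorb the $O(\varepsilon)$ terms. For $\varepsilon$ small enough both parts yield $\|x-\varepsilon u\|<1$, so a ball centered at $\varepsilon u$ of radius $<1$ would contain $K$, contradicting $R(K)=1$. This establishes $0\in\mathrm{conv}(P)$.

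Finally, Carath\'eodory's theorem lets me write $0$ as a convex combination of at most $n+1$ points of $P$, giving $x_1,\dots,x_j\in\partial K\cap\partial\mathbb{B}_n$ with $j\le n+1$ and $0\in\mathrm{conv}(\{x_1,\dots,x_j\})$; moreover $j\ge 2$, since no single $x_i$ can equal $0$. I expect the perturbation step in $(1)\Rightarrow(2)$ to be the main obstacle: one must carefully separate the region of $\partial K$ where the enclosing ball is tight from the region bounded away from $\partial\mathbb{B}_n$, and verify that a single small shift of the center simultaneously improves the former and does not spoil the latter.
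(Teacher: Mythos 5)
Your proof is correct. Note that the paper itself gives no proof of this proposition --- it is stated as a known fact with a pointer to the reference \cite{BrKo} --- so there is nothing to diverge from; your argument (the expansion $\|x_i-c\|^2<\|x_i\|^2$ forcing $\langle x_i,c\rangle>0$ against the convex combination for one direction, and the separation-plus-center-perturbation argument followed by Carath\'eodory for the other) is precisely the standard proof of this optimal-containment characterization. The only point worth tightening is that the perturbation estimate should cover all of $K$, not just $\partial K$; this is immediate since $x\mapsto\|x-\varepsilon u\|$ is convex and so attains its maximum over $K$ at an extreme point of $\partial K$, but it deserves a sentence.
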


An analogous result applies to the inradius, namely, if $\mathbb B_n\subset K$, then $r(K)=1$ if and only if there exist $x_i\in\partial K\cap\partial\mathbb B_n$, $i=1,\dots,j$, $j\in\{2,\dots,n+1\}$, such that $0\in\mathrm{conv}(\{x_1,\dots,x_j\})$. In particular, the points $x_i$ coincide with outer normals of $K$ and $\mathbb B_n$, and thus, whenever $n=2$ and $j=3$, the intersection of the halfspaces determined by the orthogonal lines to each $x_i$ containing $x_i$, $i=1,2,3$, determines a triangle $T$ with the property that 
\begin{equation}\label{eq:r_triangle_T}
\mathbb B\subset K\subset T\quad\text{ and }\quad r(T)=1.
\end{equation}

\section{(A,p,D)}

Let us consider the diagram $f_{ApD}(\mathcal K^2)$ where
\[
f_{ApD}:\mathcal K^2\rightarrow[0,\infty)^2\quad\text{with}\quad f_{ApD}(K):=\left(\frac{p(K)}{D(K)},\frac{A(K)}{D(K)^2}\right)
\]
(see Figure \ref{fig:ApD}).

We expose the best known inequalities relating the three quantities $A$, $p$, and $D$ for $K\in\mathcal K^2$. 

First of all, notice that \eqref{eq:pD} induces a linear boundary of the diagram $f_{ApD}(\mathcal K^2)$. Indeed, $K$ fulfills \eqref{eq:pD} with equality if and only if $K$ is of constant width. In particular, from \eqref{eq:isoperimetric} and \eqref{eq:BLineq} we know that if $K$ is of constant width with $w(K)=2$, then $2(\pi-\sqrt{3}) \leq A(K) \leq 4\pi$, with $K=\mathbb{RT}$ and $K=\mathbb B$ attaining the lowest and highest values (see Figure \ref{fig:ApD}). 

Second, Kubota \cite{K} proved that
\begin{equation}\label{eq:ApDKubota}
8\varphi A(K) \leq p(K)(p(K)-2D(K)\cos\varphi),
\end{equation}
where $\varphi$ is defined to be solution of $2\varphi D(K)=p(K)\sin\varphi$, with equality for $K$ being a symmetric lenses, i.e., the intersection of two Euclidean balls of the same diameter. It is worth mentioning that \eqref{eq:ApDKubota} implies a strengthening of the isoperimetric inequality \eqref{eq:isoperimetric} (see Figure \ref{fig:ApD}\footnote{Here and on each other diagram, we represent valid inequalities by lines. In case of continuous lines, we refer to real boundaries of the diagram; in case of dashed lines, not. If we draw $K$ on a diagram, it coincides with the value $f(K)$ of its image through the corresponding diagram.}).

Third, Kubota \cite{K,K2} also proved that if $2D(K)\leq p(K)\leq 3D(K)$ then
\begin{equation}\label{eq:ApDKubota2}
4A(K)\geq (p(K)-2D(K))\sqrt{4p(K)D(K)-p(K)^2}
\end{equation}
with equality for isosceles triangles with two equal longer edges, having as extreme cases $K=\mathbb L$ and $K=\mathbb T$. Moreover, in the case of $3D(K) \leq p(K)\leq \pi D(K)$, he also proved the inequality 
\begin{equation}\label{eq:ApDKubota3}
4A(K) \geq \sqrt{3} D(K)(p(K)-2D(K)),
\end{equation}
with equality only when $K=\mathbb T$. 

\begin{figure}
    \centering
    \includegraphics[width=10cm]{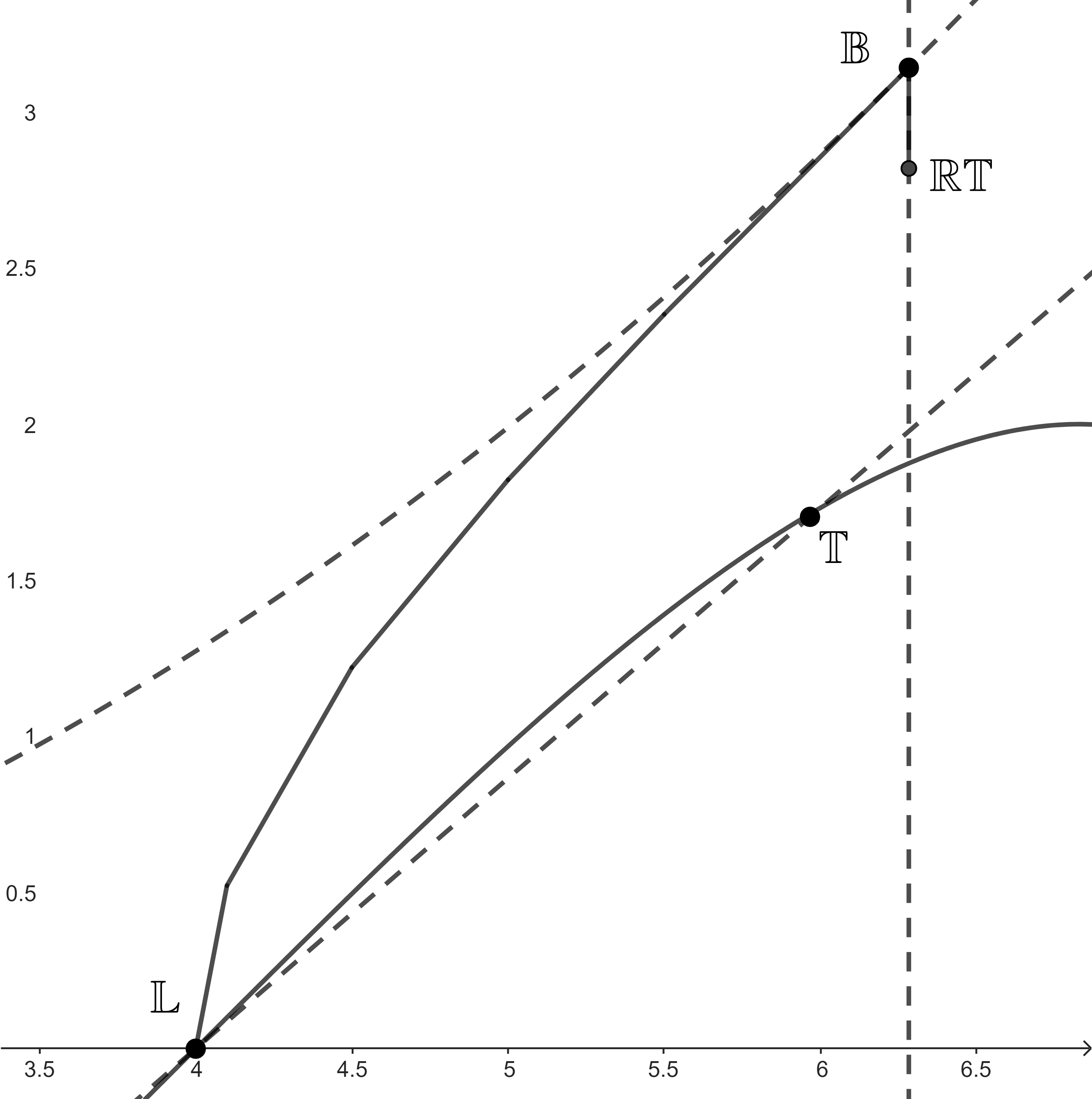}
    \caption{What we know about the diagram $(A,p,D)$, where $x=p/D$ and $y=A/D^2$. \eqref{eq:ApDKubota} provides the upper boundary, \eqref{eq:ApDKubota2} part of the lower boundary, the right-hand side of \eqref{eq:pD} gives part of the right boundary, and \eqref{eq:pD} itself and \eqref{eq:ApDKubota3} provide bounds on the remaining diagram. On top we display \eqref{eq:isoperimetric}.}
    \label{fig:ApD}
\end{figure}

\section{(p,D,r)}

Let us consider the diagram $f_{pDr}(\mathcal K^2)$ where
\[
f_{pDr}:\mathcal K^2\rightarrow[0,\infty)^2\quad\text{with}\quad f_{pDr}(K):=\left(\frac{A(K)}{D(K)^2},\frac{p(K)}{D(K)}\right)
\]
(see Figure \ref{fig:pDr_diagram}).

It is well known (see \cite[(3)]{HT}) that 
\begin{equation}\label{eq:pDrHenk}
p(K)\leq 2D(K)+4r(K),
\end{equation}
with equality for $K=\mathbb L$. 

It is also known that the right-hand side of \eqref{eq:pD} induces part of the upper boundary of the diagram, where constant width sets attain equality, and with extreme points at $K=\mathbb{RT}$ (due to \eqref{eq:SteinhagenCW}) and $K=\mathbb B$.

We now establish a new inequality, providing the lower boundary of this diagram.

\begin{figure}
    \centering
    \includegraphics[width=7cm]{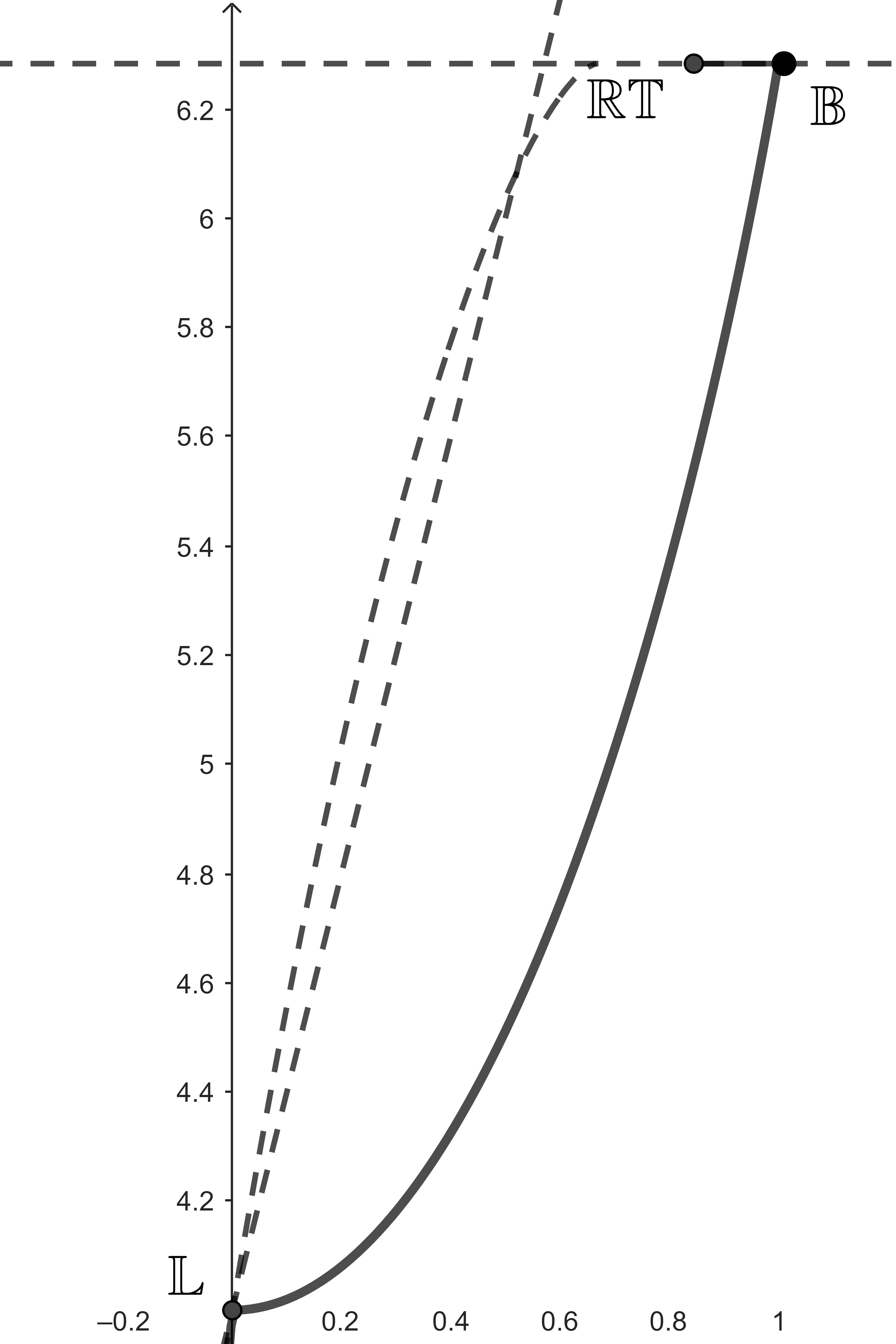}
    \caption{What we know about the diagram $(p,D,r)$, where $x=r/D$ and $y=p/D$. \eqref{eq:pD} right provides part of the upper boundary, \eqref{eq:LB(prD)} provides the lower boundary, and \eqref{eq:pD}, \eqref{eq:UB_pDr_2}, and \eqref{eq:pDrHenk} imply upper bounds.}
    \label{fig:pDr_diagram}
\end{figure}

\begin{theorem}
    Let $K\in\mathcal K^2$. Then
    \begin{equation}\label{eq:LB(prD)}
    \frac{p(K)}{r(K)} \geq 4\left(\frac{\pi}{2}-\arctan\left(\sqrt{\frac{D(K)^2}{4r(K)^2}-1}\right)+\sqrt{\frac{D(K)^2}{4r(K)^2}-1}\right).
    \end{equation}
    Moreover, for every $r\in[0,1]$, there exists $K_r\in\mathcal K^2$ for which $r(K_r)=r$, $D(K_r)=2$, and $K_r$ fulfills \eqref{eq:LB(prD)} with equality. 
\end{theorem}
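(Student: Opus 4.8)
My plan is to obtain \eqref{eq:LB(prD)} as a short consequence of the already sharp lower bound \eqref{eq:LW_pRr} for the perimeter in terms of inradius and circumradius, combined with the trivial estimate $D(K)\le 2R(K)$. Abbreviate
\[
g(R,r):=4\left(\sqrt{R^2-r^2}+r\arcsin\frac rR\right),
\]
so that \eqref{eq:LW_pRr} reads $p(K)\ge g(R(K),r(K))$. Since $K$ lies in a Euclidean ball of radius $R(K)$, any two points of $K$ are at distance at most $2R(K)$, whence $R(K)\ge D(K)/2$; moreover $D(K)\ge w(K)\ge 2r(K)$ by the left inequality of \eqref{eq:steinhagen}, so $R(K)\ge D(K)/2\ge r(K)$ and every quantity below is well defined.

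The crucial point is that, for fixed $r$, the map $R\mapsto g(R,r)$ is increasing: differentiation gives $\partial_R g(R,r)=4\sqrt{R^2-r^2}/R>0$ for $R>r$. Keeping $r=r(K)$ fixed and inserting $R(K)\ge D(K)/2$ therefore yields
\[
p(K)\ \ge\ g\bigl(R(K),r(K)\bigr)\ \ge\ g\bigl(\tfrac{D(K)}2,\,r(K)\bigr).
\]
It only remains to rewrite $g(D/2,r)$ in the stated form. With $\rho:=D/(2r)\ge 1$ one has $\sqrt{(D/2)^2-r^2}=r\sqrt{\rho^2-1}$ and $\arcsin\frac{r}{D/2}=\arcsin\frac1\rho$, and the identity $\arcsin\frac1\rho=\frac\pi2-\arctan\sqrt{\rho^2-1}$ (both sides are the angle with cosine $\sqrt{\rho^2-1}/\rho$) converts $g(D/2,r)$ into $4r\bigl(\frac\pi2-\arctan\sqrt{D^2/(4r^2)-1}+\sqrt{D^2/(4r^2)-1}\bigr)$. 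Dividing by $r$ gives \eqref{eq:LB(prD)}.

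For the equality statement I would exhibit the extremizer explicitly; it is precisely a body making \eqref{eq:LW_pRr} an equality. Given $r\in(0,1]$, set $K_r:=\conv\bigl(r\B\cup\{e_1,-e_1\}\bigr)$ (and $K_0:=[-e_1,e_1]$). Its boundary consists of the four tangent segments from $\pm e_1$ to the circle $r\,\partial\B$, each of length $\sqrt{1-r^2}$, together with the two arcs of $r\,\partial\B$ between the upper and lower tangency points, each subtending the angle $\pi-2\arccos r$. Because $K_r$ is the convex hull of a centred ball and a pair of antipodal points, \eqref{eq:LW_pRr} is tight, so $p(K_r)=g(R(K_r),r)$. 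I then check the three functionals: $r\B\subset K_r$ gives $r(K_r)\ge r$, while the supporting lines $y=\pm r$ give $w(K_r)\le 2r$ and hence $r(K_r)=r$; the inclusion $K_r\subset\B$ together with the tips $\pm e_1$ forces $R(K_r)=1$; and the tips realise $D(K_r)=2$, the only competing chord being the tip-to-opposite-tangency distance $\sqrt{1+3r^2}\le 2$. Since $R(K_r)=1=D(K_r)/2$, both inequalities in the chain above are equalities for $K_r$, so $p(K_r)=g(1,r)$ coincides with the right-hand side of \eqref{eq:LB(prD)} (a direct count also gives $p(K_r)=2\pi r-4r\arccos r+4\sqrt{1-r^2}$).

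The argument is short once the reduction is spotted, so the real content lies in the sharpness: the estimate loses nothing precisely because the extremal family of \eqref{eq:LW_pRr} — convex hulls of a centred ball with two antipodal points — simultaneously saturates $D\le 2R$, so a single body can make both inequalities equalities. The only mildly delicate verification I anticipate is confirming that for $K_r$ the diameter is attained at the antipodal tips rather than at a tip-to-arc chord, i.e. the inequality $\sqrt{1+3r^2}\le 2$ for $r\in[0,1]$.
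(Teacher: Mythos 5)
Your argument is correct, and it is genuinely different from the paper's. The paper proves \eqref{eq:LB(prD)} from scratch: it shows that $[(-1,0),(1,0)]$ and the inball must intersect, passes to $\mathrm{conv}([(-1,0),(1,0)]\cup(P+r\mathbb B))$, performs two Steiner symmetrizations to land on the candidate extremizer $K_4=\mathrm{conv}([-e_1,e_1]\cup r\mathbb B)$ while controlling $D$, $r$, and $p$, and then computes $p(K_4)$ by an explicit tangent-line calculation. You instead deduce the inequality in a few lines from the already-stated sharp bound \eqref{eq:LW_pRr} of B\"or\"oczky--Hern\'andez Cifre--Salinas, using only the monotonicity $\partial_R g(R,r)=4\sqrt{R^2-r^2}/R\ge 0$ and the trivial bound $R(K)\ge D(K)/2$; the identity $\arcsin(1/\rho)=\tfrac\pi2-\arctan\sqrt{\rho^2-1}$ then matches the two formulas, and your verification of the equality case (including the check that $\sqrt{1+3r^2}\le 2$ rules out tip-to-tangency chords) is sound, since the extremal family of \eqref{eq:LW_pRr} simultaneously saturates $D=2R$. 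What your route buys is brevity and a conceptual explanation of why the same body $\mathrm{conv}([-e_1,e_1]\cup r\mathbb B)$ is extremal for both diagrams; what it costs is self-containedness (it imports the full strength of \eqref{eq:LW_pRr}, whose proof is itself nontrivial) and, arguably, the symmetrization machinery that the paper reuses verbatim to prove the higher-dimensional mean-width analogue \eqref{eq:brD_NEW_LOWER}. One cosmetic caveat: state explicitly that for $r(K)=0$ or $D(K)=2r(K)$ the inequality is interpreted in the obvious limiting sense, as the paper implicitly does.
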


\begin{proof}
    After a translation and rescaling of $K$, let us assume that $[(-1,0),(0,1)] \subset K$ with $D(K)=2$. Moreover, let $P+r(K)\mathbb B\subset K$, for some $P\in K$. 
    
    Notice that $[(-1,0),(0,1)]\cap(P+r(K)\mathbb B) \neq\emptyset$. Indeed, assume that $r(K)<1$ (as otherwise $K=\mathbb B$ and the result is true). since $P+r(K)\mathbb B\subset K\subset\{(x,y)\in\mathbb R^2:-1\leq x\leq 1\}$, if we would have that $[(-1,0),(0,1)]\cap(P+r(K)\mathbb B)=\emptyset$, then by the convexity of $K$ we can easily find an Euclidean ball of strictly larger radius than $r(K)$ within  $\mathrm{conv}([(-1,0),(0,1)]\cup(P+r(K)\mathbb B)) \subset K$, thus contradicting the definition of $r(K)$.
    
    By convexity, we have that $K_1:=\mathrm{conv}([(-1,0),(1,0)]\cup(P+r(K)\mathbb B)) \subset K$, and thus by monotonicity of $p$ then $p(K_1)\leq p(K)$. Notice that $D(K_1)=D(K)=2$ and $r(K_1)=r(K)$.

    Moreover, since $[(-1,0),(0,1)]\cap(P+r(K)\mathbb B) \neq\emptyset$, then $P+r(K)(\pm e_2)\in\partial(K_1)$, and thus the lines $P+r(K)(\pm e_2)+\mathrm{lin}(\{e_1\})$ support $K_1$. Let $K_2$ be the Steiner symmetrization of $K_1$ with respect to the line $\mathrm{lin}(\{e_1\})$. It is clear that 
    \[
    [(-1,0),(1,0)],(P_1,0)+r(K)\mathbb B\subset K_2,\quad (P_1,0)+r(K)(\pm e_2)\in\partial K_2,
    \]
    where $P=(P_1,P_2)$. Since $2=D(K_1)=D([(-1,0),(1,0)])\leq D(K_2)\leq D(K_1)$, then $D(K_2)=2$. Moreover, it is also clear that every vertical line intersecting $K_1$ is a line segment of length at most $2r(K_1)$, and so $r(K_2)\leq r(K_1)=r(K)$. This, together with $(P_1,0)+r(K)\mathbb B\subset K_2$ implies that $r(K_2)=r(K)=r(K_1)$. By properties of the Steiner symmetrization we know that $p(K_2)\leq p(K_1)$.

    Now again, we do another Steiner symmetrization $K_3$ of $K_2$, this time with respect to the line $\mathrm{lin}(e_2)$. We still have all previous properties inherited from $K_1$ on $K_2$. It is easy and almost analogous to check that $D(K_3)=D([(-1,0),(1,0)])=2$, $r(K_3)=r(K_2)=r(K)$, and that $p(K_3)\leq p(K_2)\leq p(K_1)\leq p(K)$. Finally, letting $K_4:=\mathrm{conv}([(-1,0),(1,0)]\cup(r(K)\mathbb B)) \subset K_3$, we can clearly see that
    \[
    r(K_4)=r(K),\quad D(K_4)=D(K)=2,\quad\text{and}\quad p(K) \geq p(K_4).
    \]
    Let us now compute $p(K_4)$, for given values $D(K_4)=2$ and $r(K_4)=r\in[0,1]$. To do so, let us compute the only line with negative slope supporting $r\mathbb B$ and passing through $(1,0)$. Let $y=m(x-1)$, $m<0$, be this line, such that it is tangent to $r\mathbb B$, i.e. such that
    \[
    \left\{\begin{array}{c}
         y=m(x-1) \\
         x^2+y^2=r^2 
    \end{array}\right.
    \]
    has a unique solution. Substituting $y$ onto the second equations gives $x^2+m^2(x-1)^2=r^2$, i.e. $(1+m^2)x^2-2m^2x+m^2-r^2=0$. This equation has a unique solution - root - if and only if $m^4-(1+m^2)(m^2-r^2)=0$, i.e. if and only if $m=-\frac{r}{\sqrt{1-r^2}}$. In this case, the intersecting point between the line and the circumpherence has coordinates
    \[
    x_0=\frac{m^2}{1+m^2} \quad\text{and}\quad y_0=\frac{-m}{1+m^2}.
    \]
    Since $p(K_4)$ is symmetric with respect to $\mathrm{lin}(e_i)$, $i=1,2$, its perimeter equals four times its perimeter on the first orthant. Since the angle of $(x_0,y_0)$ with the horizontal line equals $\arctan(y_0/x_0)$, we can conclude that
    \[
    \begin{split}
    p(K) \geq p(K_4) & = 4\left(r\left(\frac{\pi}{2}-\arctan(y_0/x_0)\right)+\sqrt{(x_0-1)^2+y_0^2}\right) \\
    & = 4\left(r\left(\frac{\pi}{2}-\arctan\left(\frac{\sqrt{1-r^2}}{r}\right)\right)+\sqrt{1-r^2}\right).
    \end{split}
    \]
    
    In order to derive \eqref{eq:LB(prD)}, we simply apply the above inequality to $2K/D(K)$ (which has diameter $2$) and use the $1$-homogeneity of $p$ and $r$. 

    Moreover, it is clear from the construction that every set $K=\mathrm{conv}([-e_1,e_1]\cup(r\mathbb B))$, for every $r\in[0,1]$, attains equality in \eqref{eq:LB(prD)}.
\end{proof}

We now derive an easy non tight upper bound of the diagram as a consequence of \eqref{eq:UB_pDw_2}.

\begin{corollary}
    Let $K\in\mathcal K^2$. Then
    \begin{equation}\label{eq:UB_pDr_2}
        p(K) \leq \sqrt{4D(K)^2-9r(K)^2} +2D(K)\arcsin\left(\frac{3r(K)}{D(K)}\right).
    \end{equation}
\end{corollary}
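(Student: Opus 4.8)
The plan is to obtain \eqref{eq:UB_pDr_2} directly from the sharp width--diameter bound \eqref{eq:UB_pDw_2} by trading the width $w(K)$ for the inradius via Steinhagen's inequality \eqref{eq:steinhagen}. The only analytic ingredient needed is that the right-hand side of \eqref{eq:UB_pDw_2}, regarded as a function of $w$ with $D$ held fixed, is monotone increasing, so that overestimating $w$ overestimates the whole bound.

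Concretely, I would fix $D:=D(K)$ and set $g(t):=2\sqrt{D^2-t^2}+2D\arcsin(t/D)$ for $t\in[0,D]$, so that \eqref{eq:UB_pDw_2} reads $p(K)\le g(w(K))$. Differentiating and using $\sqrt{1-t^2/D^2}=\sqrt{D^2-t^2}/D$ to merge the two terms gives the one-line identity $g'(t)=\frac{2(D-t)}{\sqrt{D^2-t^2}}\ge 0$ on $[0,D)$; hence $g$ is non-decreasing. Since the whole diagram satisfies $w(K)\le D(K)$ (for instance by \eqref{eq:concentricities}) and $r(K)/D(K)\le 1/2$ there, I would restrict to the range $3r(K)\le D(K)$ in which $\arcsin(3r/D)$ and the radicals are defined. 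In that range Steinhagen's inequality \eqref{eq:steinhagen} gives $w(K)\le 3r(K)\le D(K)$, so monotonicity of $g$ yields
\[
p(K)\le g\bigl(w(K)\bigr)\le g\bigl(3r(K)\bigr)=2\sqrt{D(K)^2-9r(K)^2}+2D(K)\arcsin\!\left(\frac{3r(K)}{D(K)}\right).
\]
Finally, because $36r^2\ge 9r^2$ one has $2\sqrt{D^2-9r^2}=\sqrt{4D^2-36r^2}\le\sqrt{4D^2-9r^2}$, and weakening the radical in this way turns the displayed estimate into the stated bound \eqref{eq:UB_pDr_2}.

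The computation is entirely routine once the monotonicity of $g$ is in hand, so I expect the only genuine care to be bookkeeping of the domain: the right-hand side of \eqref{eq:UB_pDr_2} is only meaningful when $3r(K)\le D(K)$, and in the complementary sliver the endpoint value $g(D)=\pi D(K)$ (equivalently the right inequality of \eqref{eq:pD}) supplies the bound instead. It is worth recording that this argument is deliberately lossy: equality in \eqref{eq:UB_pDw_2} is attained by a lens, for which $w=2r$, whereas the substitution $w=3r$ is saturated only by the equilateral triangle. Thus the two extremal conditions are incompatible, which is exactly why the resulting inequality is non-tight, as the corollary asserts.
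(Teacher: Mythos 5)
Your proposal is correct and follows essentially the same route as the paper: apply the Kubota bound \eqref{eq:UB_pDw_2}, verify that its right-hand side is increasing in $w$ for fixed $D$, substitute $w(K)\le 3r(K)$ from Steinhagen's inequality \eqref{eq:steinhagen}, and rescale. You are in fact somewhat more careful than the paper, which asserts the monotonicity without computing $g'$, silently passes from the sharper radical $\sqrt{4D^2-36r^2}$ to the stated $\sqrt{4D^2-9r^2}$, and does not remark that the right-hand side of \eqref{eq:UB_pDr_2} is only defined when $3r(K)\le D(K)$.
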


\begin{proof}
    After a suitable dilatation, let us assume that $D(K)=2$, $w:=w(K)$, and $r:=r(K)$. Using \eqref{eq:UB_pDw_2} then
    \[
    p(K) \leq 2\sqrt{4-w^2}+4\arcsin\left(\frac{w}{2}\right)=:f(w).
    \]
    Since $f(w)$ is clearly increasing on $w\in[0,2]$, using \eqref{eq:steinhagen} we can derive that
    \[
    p(K) \leq 2\sqrt{4-9r^2}+4\arcsin\left(\frac{3r}{2}\right).
    \]
    In order to obtain the inequality, we simply need to replace $K$ by $2K/D(K)$ and use the $1$-homogeneity of $p$ and $r$.
\end{proof}

\section{(p,R,w)}

Let us consider the diagram $f_{pRw}(\mathcal K^2)$ where
\[
f_{pRw}:\mathcal K^2\rightarrow[0,\infty)^2\quad\text{with}\quad f_{pRw}(K):=\left(\frac{w(K)}{R(K)},\frac{p(K)}{R(K)}\right)
\]
(see Figure \ref{fig:pRw_diagram}).

The inequality \eqref{eq:pw} induces part of the right boundary of the diagram. Remember that $K$ belongs to this boundary if and only if $K$ is of constant width. One of the extreme points is $K=\mathbb B$. On the other hand, if $K$ is of constant width, i.e. $D(K)=w(K)$, together with \eqref{eq:jung}, implies that $\sqrt{3} R(K) \leq w(K)$, with equality when $K=\mathbb{RT}$. Thus, this last set is the other extreme point of this boundary.

\begin{figure}
    \centering
    \includegraphics[width=7cm]{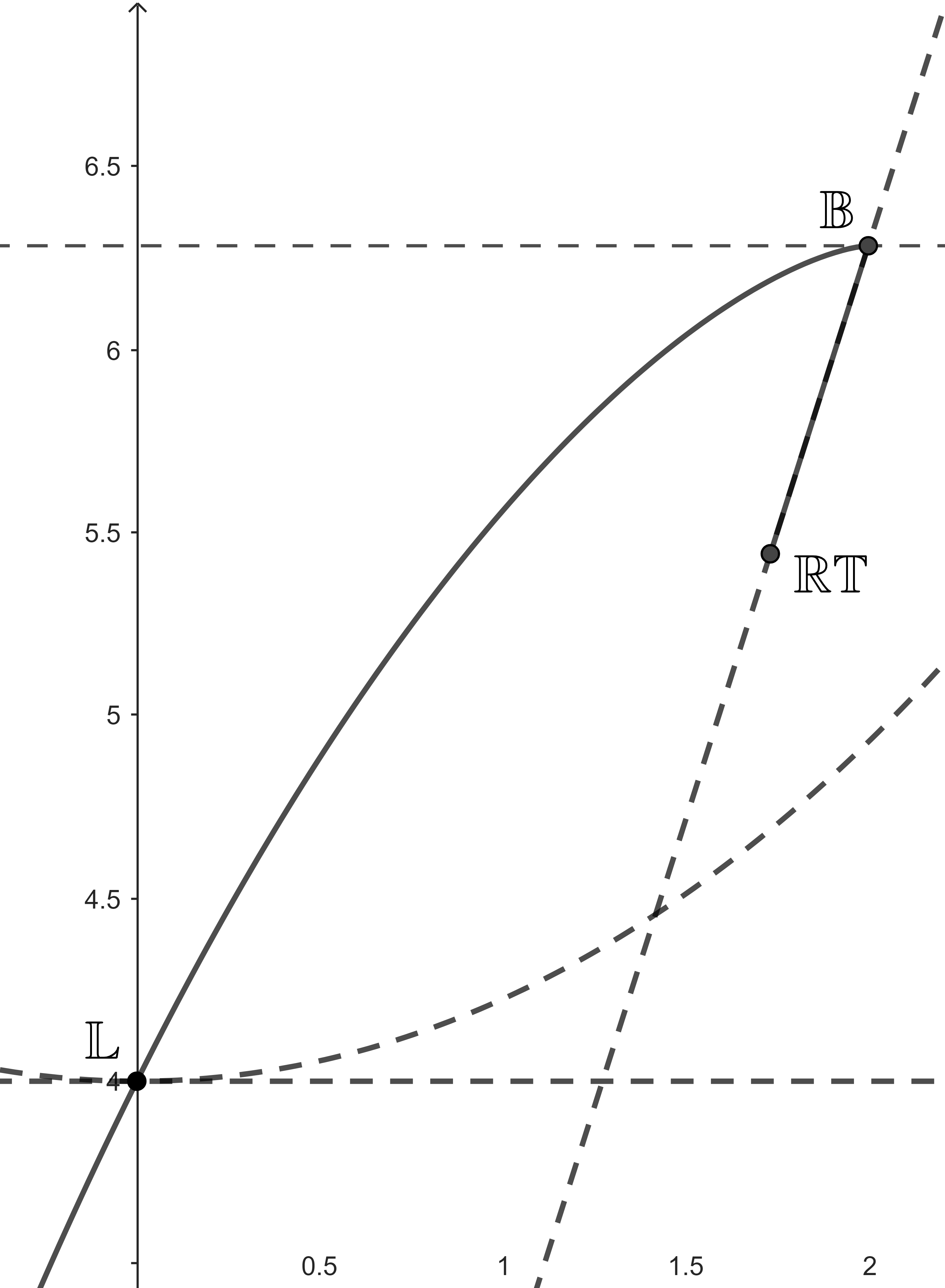}
    \caption{What we know about the diagram $(p,R,w)$, where $x=w/R$ and $y=p/R$. \eqref{eq:pw} induces part of the right boundary, \eqref{eq:pRw_NEW} induces the upper boundary, and \eqref{eq:pw} and \eqref{eq:LB(pRw)} provide bounds of the diagram. On top and bottom we show \eqref{eq:pR}.}
    \label{fig:pRw_diagram}
\end{figure}

We now show a new inequality which induces the upper boundary of the diagram.

\begin{theorem}
    Let $K\in\mathcal K^2$. Then
    \begin{equation}\label{eq:pRw_NEW}
    \frac{p(K)}{2R(K)} \leq \pi + 2 \sqrt{1-\frac{w(K)^2}{4R(K)^2}} - 2 \arccos\left(\frac{w(K)}{2R(K)}\right).
    \end{equation}
    Moreover, for every $w\in[0,2]$, there exists $K_w\in\mathcal K^2$ such that $R(K_w)=1$, $w(K_w)=w$, and $p(K_w)$ fulfills \eqref{eq:pRw_NEW} with equality.
\end{theorem}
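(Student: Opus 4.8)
The plan is to bound $p(K)$ from above by trapping $K$ inside an explicit \emph{lens} and then invoking only the monotonicity of the perimeter under inclusion; no symmetrization is needed here. After rescaling I assume $R(K)=1$ with the circumcenter at the origin, so $K\subseteq\mathbb B$. Writing $w:=w(K)$ and rotating so that a minimal-width slab is horizontal, let $S=\{(x,y):\ell\le y\le u\}$ with $u-\ell=w$ and both lines $y=u$, $y=\ell$ supporting $K$. Then $K\subseteq\mathbb B\cap S$, and by monotonicity of the perimeter $p(K)\le p(\mathbb B\cap S)$, so it suffices to maximize $p(\mathbb B\cap S)$ over the admissible positions of $S$.

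First I would pin down the geometry of $\mathbb B\cap S$. Since the two supporting lines touch $K\subseteq\mathbb B$, we have $-1\le\ell<u\le 1$, so $\mathbb B\cap S$ is a genuine lens bounded by two parallel chords and two circular arcs. Moreover, Proposition \ref{prop:optcont} produces points $x_i\in\partial K\cap\partial\mathbb B$ with $0\in\mathrm{conv}(\{x_i\})$; as these points lie in the convex set $S$, the origin itself lies in $S$, i.e. $\ell\le 0\le u$. Setting $a:=u\ge 0$ and $b:=-\ell\ge 0$ with $a+b=w$, a direct computation of the two arc-lengths and the two chord-lengths gives
\[
p(\mathbb B\cap S)=2\bigl(\arcsin a+\arcsin b\bigr)+2\sqrt{1-a^2}+2\sqrt{1-b^2}.
\]

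The core step is the one-variable maximization of this quantity subject to $a+b=w$, $a,b\in[0,1]$. Differentiating along the constraint, the stationarity condition reduces to $\sqrt{(1-a)/(1+a)}=\sqrt{(1-b)/(1+b)}$, whose only solution is $a=b=w/2$; since $a\mapsto\sqrt{(1-a)/(1+a)}$ is strictly decreasing, this critical point is the global maximum (the endpoints of the segment give smaller values). Hence
\[
p(K)\le 4\arcsin\frac{w}{2}+4\sqrt{1-\frac{w^2}{4}},
\]
and applying $\arcsin(w/2)=\frac{\pi}{2}-\arccos(w/2)$ together with the $1$-homogeneity of $p$, $w$, and $R$ (replacing $K$ by $K/R(K)$, which has circumradius $1$) yields exactly \eqref{eq:pRw_NEW}.

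For sharpness I would exhibit the centered lens $K_w:=\mathbb B\cap\{|y|\le w/2\}$ and verify that $R(K_w)=1$, that its perimeter equals the right-hand side above, and that its minimum width is precisely $w$: computing the support function one finds the width equals $2$ for directions in which the optimal boundary point of the circle stays inside the slab, and it decreases monotonically to $w$ as the direction approaches the vertical, so the minimum is attained vertically. I expect the two main obstacles to be exactly these last two points — establishing $0\in S$ (which is where Proposition \ref{prop:optcont} is essential, as it guarantees a symmetric two-chord lens rather than a single cap) and confirming that the extremal lens has minimum width $w$; once the containment $K\subseteq\mathbb B\cap S$ is in place, the analytic optimization is routine.
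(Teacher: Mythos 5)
Your proposal is correct and follows essentially the same route as the paper: after normalizing $R(K)=1$, both arguments enclose $K$ in $\mathbb B\cap S$ with the slab $S$ containing the origin (via Proposition \ref{prop:optcont}), reduce to a one-parameter perimeter maximization over the position of the slab, show the centered position $a=b=w/2$ is optimal by the same monotonicity of $x\mapsto\sqrt{(1-x)/(1+x)}$, and exhibit the symmetric lens as the equality case. Your explicit verification that the centered lens has minimum width exactly $w$ is a point the paper leaves implicit, but otherwise the two proofs coincide up to the cosmetic $\arcsin$ versus $\arccos$ parametrization.
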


\begin{proof}
After a suitable dilatation and rigid motion, we can assume that $R(K)=1$ with $K\subset\mathbb B$, and such that the lines $y=-a$ and $y=w-a$ support $K$, where $0\leq a\leq w-a\leq 1$ with $w:=w(K)$, i.e.
\[
\text{if }w\in[0,1]\text{ then }a\in[0,w/2]\quad\text{and if }w\in[1,2]\text{ then }a\in[w-1,w/2].
\]
Above, we take the lines containing $0$ between them. This is due to Proposition \ref{prop:optcont}, since a consequence of $R(K)=1$ with $K\subset\mathbb B$ is that $0\in K$. In particular $K\subset \mathbb B\cap\{(x,y)\in\mathbb R^2:-a\leq y\leq w-a\}=:K'$, and thus $p(K)\leq p(K')$. If we denote by $l_1$ the arc of $\mathbb S^1$ in the region $y\geq w-a$, $l_2$ the arc of $\partial\mathbb B$ in the region $y\leq -a$, $l_3:=\partial \mathbb B\setminus(l_1\cup l_2)$, $l_4$ the line-segment of $y=w-a$ within $\mathbb B$, and $l_5$ the line-segment of $y=-a$ within $\mathbb B$, then it is clear that
\[
\begin{split}
p(K') & =l_3+l_4+l_5 \\
& =2\pi-2\arccos(w-a)-2\arccos(a)+\sqrt{1-(w-a)^2}+\sqrt{1-a^2}=:f_w(a).
\end{split}
\]
Notice that
\[
f'_w(a)=2\left(\frac{w-a-1}{\sqrt{1-(w-a)^2}}-\frac{1-a}{\sqrt{1-a^2}}\right).
\]
We aim to show that the above expression is nonnegative whenever $0\leq a\leq w-a\leq 1$ (which would imply that $f_w(a)$ is nondecreasing on $a$). To do so, notice that it would then be enough to show that the function $g(x):=\frac{1-x}{\sqrt{1-x^2}}$ is decreasing for every $x\in[0,1]$. Since
\[
g'(x)=\frac{x-1}{2(1-x^2)\sqrt{1-x^2}} \leq 0
\]
for every $x\in[0,1]$, the assertion holds. Therefore
\[
p(K')=f_w(a) \leq f_w\left(\frac w2\right)=2\left(\pi-2\arccos\left(\frac{w}{2}\right)+2\sqrt{1-\left(\frac{w}{2}\right)^2}\right).
\]
In order to conclude the proof of the inequality, we simply replace $K$ by $K/R(K)$ and use the homogeneity of the functionals $p$ and $w$.

Evidently, equality holds for every $K=\mathbb B\cap\{(x,y)\in\mathbb R^2:|y|\leq w/2\}$, for every $w\in[0,2]$.
\end{proof}

We can easily derived lower bounds, for instance, using \eqref{eq:steinhagen} in combination with \eqref{eq:LW_pRr}.

\begin{corollary}
    Let $K\in\mathcal K^2$. Then
    \begin{equation}\label{eq:LB(pRw)}
    p(K) \geq 4\left(\sqrt{R(K)^2-\frac{w(K)^2}{9}}+\frac{w(K)}{3}\arcsin\left(\frac{w(K)}{3R(K)}\right)\right).
    \end{equation}
    Moreover, equality holds if $K=\mathbb L$.
\end{corollary}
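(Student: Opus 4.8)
The plan is to feed the optimal lower bound \eqref{eq:LW_pRr} into Steinhagen's inequality \eqref{eq:steinhagen}, exactly as the remark preceding the statement suggests. Inequality \eqref{eq:LW_pRr} bounds $p(K)$ from below by the quantity
\[
F(r(K)) := 4\left(\sqrt{R(K)^2 - r(K)^2} + r(K)\arcsin\left(\frac{r(K)}{R(K)}\right)\right),
\]
which depends on $K$ only through $r(K)$ once $R(K)$ is fixed. Steinhagen's inequality \eqref{eq:steinhagen} gives $r(K) \geq w(K)/3$, so if I can show that $F$ is nondecreasing in its argument on the admissible range, then replacing $r(K)$ by the smaller value $w(K)/3$ only decreases $F$, and I obtain
\[
p(K) \geq F(r(K)) \geq F\left(\frac{w(K)}{3}\right),
\]
which upon expanding is precisely \eqref{eq:LB(pRw)}.

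The one computation to carry out is therefore the monotonicity of $F$. Treating $R := R(K)$ as a fixed positive constant and writing $F(r)/4 = \sqrt{R^2 - r^2} + r\arcsin(r/R)$ for $r \in [0, R]$, I differentiate. The derivative of the first summand is $-r/\sqrt{R^2-r^2}$, while the product rule on the second gives $\arcsin(r/R) + r\cdot(1/R)/\sqrt{1 - r^2/R^2} = \arcsin(r/R) + r/\sqrt{R^2 - r^2}$. The two rational terms cancel, leaving
\[
\frac{1}{4}F'(r) = \arcsin\left(\frac{r}{R}\right) \geq 0
\]
for all $r \in [0, R]$. Hence $F$ is nondecreasing, as needed. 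Note that $w(K)/3 \leq 2R(K)/3 < R(K)$ because $w(K) \leq 2R(K)$, so $F(w(K)/3)$ lies in the range where this monotonicity applies and the expression is well defined.

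With monotonicity in hand the inequality follows immediately, and there is no genuine obstacle here: the only mild surprise is the clean cancellation that reduces $F'$ to an arcsine. For the equality case, taking $K = \mathbb L$ of edge length $2$ gives $r(\mathbb L) = w(\mathbb L) = 0$, $R(\mathbb L) = 1$, and $p(\mathbb L) = 4$, so both sides of \eqref{eq:LB(pRw)} equal $4$; equivalently, $\mathbb L$ is the degenerate member of the equality family of \eqref{eq:LW_pRr} with $r = 0$, forcing $F(0) = 4R$. Since no normalization was used beyond treating $R(K)$ as a parameter, no separate homogeneity step is required, although one could equally rescale to $R(K) = 1$ at the outset.
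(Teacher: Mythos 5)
Your proof is correct and follows essentially the same route as the paper: both arguments combine \eqref{eq:LW_pRr} with Steinhagen's bound $r(K)\geq w(K)/3$ and hinge on the same observation that the derivative of $\sqrt{R^2-r^2}+r\arcsin(r/R)$ collapses to $\arcsin(r/R)\geq 0$. The only cosmetic difference is that the paper first normalizes to $R(K)=1$ and restores homogeneity at the end, whereas you carry $R(K)$ as a parameter throughout.
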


\begin{proof}
    Let us suppose that $R(K)=1$, $r:=r(K)$, and $w:=w(K)$. Notice that $f(r):=\sqrt{1-r^2}+r\arcsin r$ is an increasing function in $r\in[0,1]$ since $f'(r)=\arcsin r$. Hence, using \eqref{eq:steinhagen} in combination with \eqref{eq:LW_pRr} we get that
    \[
    p(K) \geq 4\left(\sqrt{1-r^2}+r\arcsin r\right) \geq 4\left(\sqrt{1-\frac{w^2}{9}}+\frac w3\arcsin \left(\frac w3\right)\right).
    \]
    We then obtain the desired result when applying the above inequality to the set $K/R(K)$ and using the $1$-homogeneity of $p$ and $w$.

Evidently, when $K=\mathbb L$ we attain equality in \eqref{eq:LB(pRw)}.
\end{proof}

\section{(A,R,w)}

Let us consider the diagram $f_{ARw}(\mathcal K^2)$ where
\[
f_{ARw}:\mathcal K^2\rightarrow[0,\infty)^2\quad\text{with}\quad f_{ARw}(K):=\left(\frac{w(K)}{R(K)},\frac{A(K)}{R(K)^2}\right)
\]
(see Figure \ref{fig:ARw_diagram}).

Apparently, the only two known inequalities relating this triple are
\begin{equation}\label{eq:ARw_OLD}
\frac{\sqrt{3}}{2}R(K)w(K) \leq A(K) \leq 2 R(K)w(K)
\end{equation}
(see \cite{HT}) with both inequalities having equality if $K=\mathbb L$, and the left-hand side also attaining equality at $K=\mathbb T$.

\begin{figure}
    \centering
    \includegraphics[width=7cm]{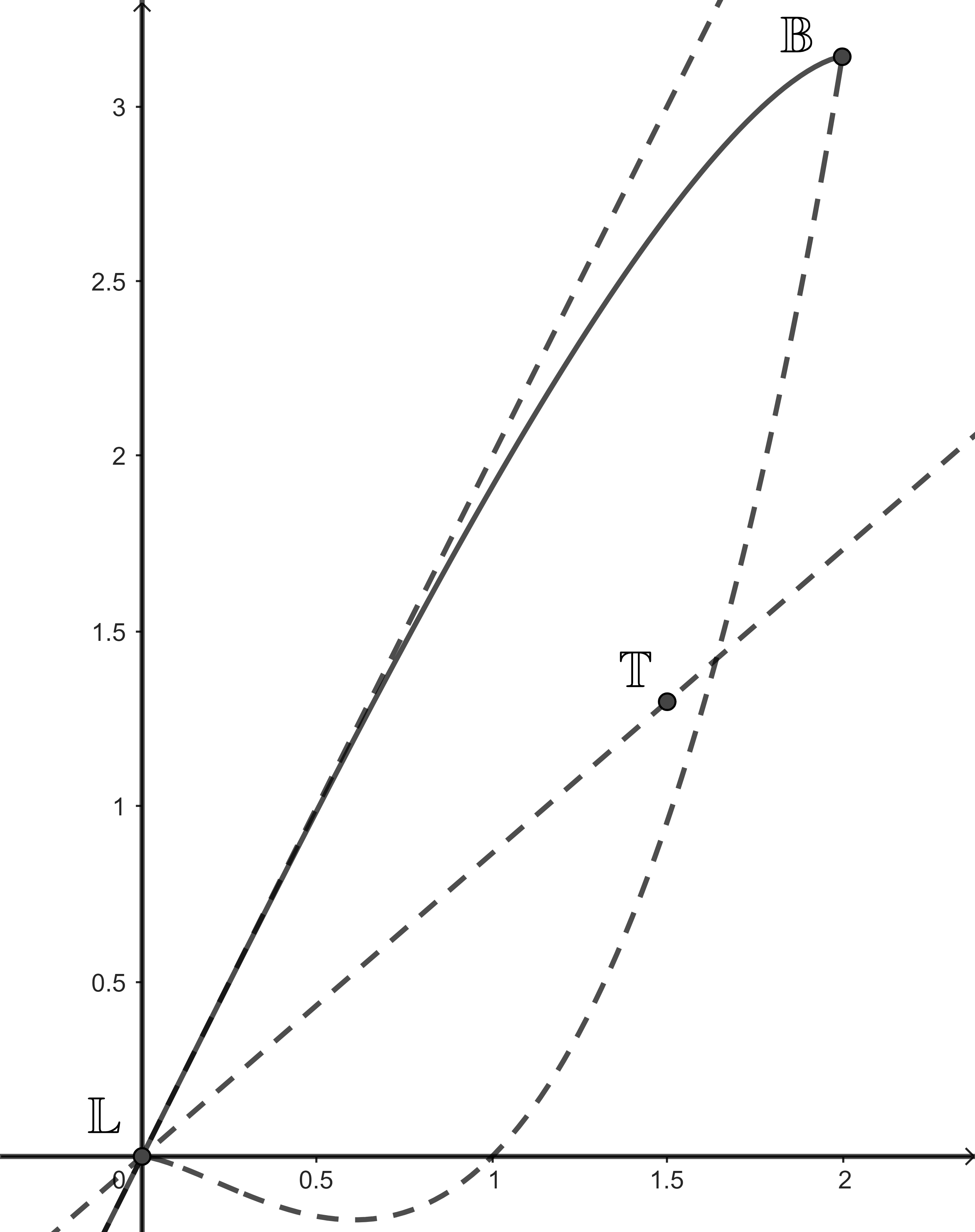}
    \caption{What we know about the $(A,R,w)$ diagram, where $x=w/R$ and $y=A/R^2$. \eqref{eq:ARw_NEW_UPPER} provides the upper boundary, \eqref{eq:ARw_LOWER} and the left-hand side of \eqref{eq:ARw_OLD} provide lower bounds for the diagram. Over \eqref{eq:ARw_NEW_UPPER} we see the right-hand side of \eqref{eq:ARw_OLD}.}
    \label{fig:ARw_diagram}
\end{figure}

The first inequality we prove induces the upper boundary of the diagram.

\begin{theorem}
    Let $K\in\mathcal K^2$. Then
    \begin{equation}\label{eq:ARw_NEW_UPPER}
    A(K) \leq R(K)^2\left(\pi-2\arccos\left(\frac{w(K)}{2R(K)}\right)+\frac{w(K)}{R(K)}\sqrt{1-\left(\frac{w(K)}{2R(K)}\right)^2}\right).
    \end{equation}
    Moreover, for every $w\in[0,2]$, there exists $K_w\in\mathcal K^2$ such that $R(K_w)=1$, $w(K_w)=w$, and $A(K_w)$ fulfill \eqref{eq:ARw_NEW_UPPER} with equality.
\end{theorem}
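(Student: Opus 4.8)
The plan is to follow verbatim the strategy used for the companion inequality \eqref{eq:pRw_NEW}, simply replacing the perimeter functional by the area. After a dilatation and rigid motion I would assume $R(K)=1$ with $K\subset\mathbb B$; Proposition \ref{prop:optcont} then forces $0\in K$, which lets me place two parallel supporting lines $y=-a$ and $y=w-a$ realizing the width, where $w:=w(K)$ and the position parameter satisfies $0\le a\le w-a\le 1$ (explicitly, $a\in[0,w/2]$ when $w\in[0,1]$ and $a\in[w-1,w/2]$ when $w\in[1,2]$). The key inclusion is then
\[
K\subset \mathbb B\cap\{(x,y)\in\mathbb R^2:-a\le y\le w-a\}=:K',
\]
so that by monotonicity of the area under inclusion $A(K)\le A(K')$. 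Thus it suffices to bound $A(K')$ uniformly over the admissible $a$.

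The second step is the explicit computation of $A(K')$, which is the area of the unit disk minus the two circular segments cut off above $y=w-a$ and below $y=-a$. Since a segment of the unit disk beyond a chord at distance $d\in[0,1]$ from the center has area $\arccos(d)-d\sqrt{1-d^2}$, I obtain
\[
A(K')=\pi-\arccos(w-a)-\arccos(a)+(w-a)\sqrt{1-(w-a)^2}+a\sqrt{1-a^2}=:g_w(a).
\]
The third step is to maximize $g_w$ over $a$. A direct differentiation is the heart of the argument: the derivatives of the two arccosine terms combine pairwise with the derivatives of the two square-root terms, and after simplification the expression collapses to
\[
g_w'(a)=2\left(\sqrt{1-a^2}-\sqrt{1-(w-a)^2}\right).
\]
On the admissible range $0\le a\le w-a$ one has $a^2\le (w-a)^2$, and since $t\mapsto\sqrt{1-t^2}$ is decreasing on $[0,1]$ this gives $g_w'(a)\ge 0$; hence $g_w$ is nondecreasing and attains its maximum at the symmetric position $a=w/2$. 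Evaluating there yields
\[
A(K)\le A(K')=g_w(a)\le g_w\!\left(\tfrac{w}{2}\right)=\pi-2\arccos\!\left(\tfrac{w}{2}\right)+w\sqrt{1-\left(\tfrac{w}{2}\right)^2},
\]
which is precisely \eqref{eq:ARw_NEW_UPPER} in the normalized case $R(K)=1$. Applying this to $K/R(K)$ and using the $2$-homogeneity of the area together with the $1$-homogeneity of $w$ produces the stated inequality, and the symmetric set $K_w=\mathbb B\cap\{(x,y)\in\mathbb R^2:|y|\le w/2\}$ (i.e.\ the case $a=w/2$, for which $K=K'$) realizes equality for every $w\in[0,2]$.

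The one step that requires genuine care, rather than being a transcription of the perimeter proof, is the area bookkeeping in the second step: getting the two circular-segment contributions right (in particular their signs and the roles of the distances $a$ and $w-a$) and then verifying that the derivative really telescopes to the clean difference of square roots above. Once that simplification is confirmed, the sign of $g_w'$ is immediate and the rest is routine; I expect no essential difficulty in the normalization or rescaling, since these mirror the argument already carried out for \eqref{eq:pRw_NEW}.
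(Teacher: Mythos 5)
Your proposal is correct and follows essentially the same route as the paper: the inclusion $K\subset K'=\mathbb B\cap\{-a\le y\le w-a\}$, the explicit formula for $A(K')$ (you subtract two circular segments from the disk, the paper assembles four triangles and two sectors, but both yield the identical function $f_w(a)$), the derivative $f_w'(a)=2\bigl(\sqrt{1-a^2}-\sqrt{1-(w-a)^2}\bigr)\ge 0$, the maximization at $a=w/2$, and the rescaling and equality cases. No gaps.
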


\begin{proof}
    After a translation and dilatation, let us assume that $K\subset\mathbb B$ with $R(K)=1$. Moreover, after a suitable rotation, let us assume that the lines supporting $K$ with width $w:=w(K)$ are given by the equations $y=w-a$ and $y=-a$, for some $a\in[0,1]$ such that $a\leq w-a$, i.e. 
    \[
    \text{if }w\in[0,1]\text{ then }a\in[0,w/2]\text{ and if }w\in[1,2]\text{ then }a\in[w-1,w/2].
    \]
    Above, we take the lines containing $0$ between them. This is due to Proposition \ref{prop:optcont}, since a consequence of $R(K)=1$ with $K\subset\mathbb B$ is that $0\in K$. 
    In particular, we have that
    \[
    K \subset \mathbb B\cap\{(x,y)\in\mathbb R^2:-a\leq y\leq w-a\}=:K'.
    \]
    Thus $A(K)\leq A(K')$. Notice that $R(K')=R(K)=1$ and $w(K')=w(K)=w$. Now let us compute $A(K')$. Notice that $K'$ is the union of four triangles and two circular sectors as in Figure \ref{fig:proof_ARw}. 

    \begin{figure}
        \centering
        \includegraphics[width=7cm]{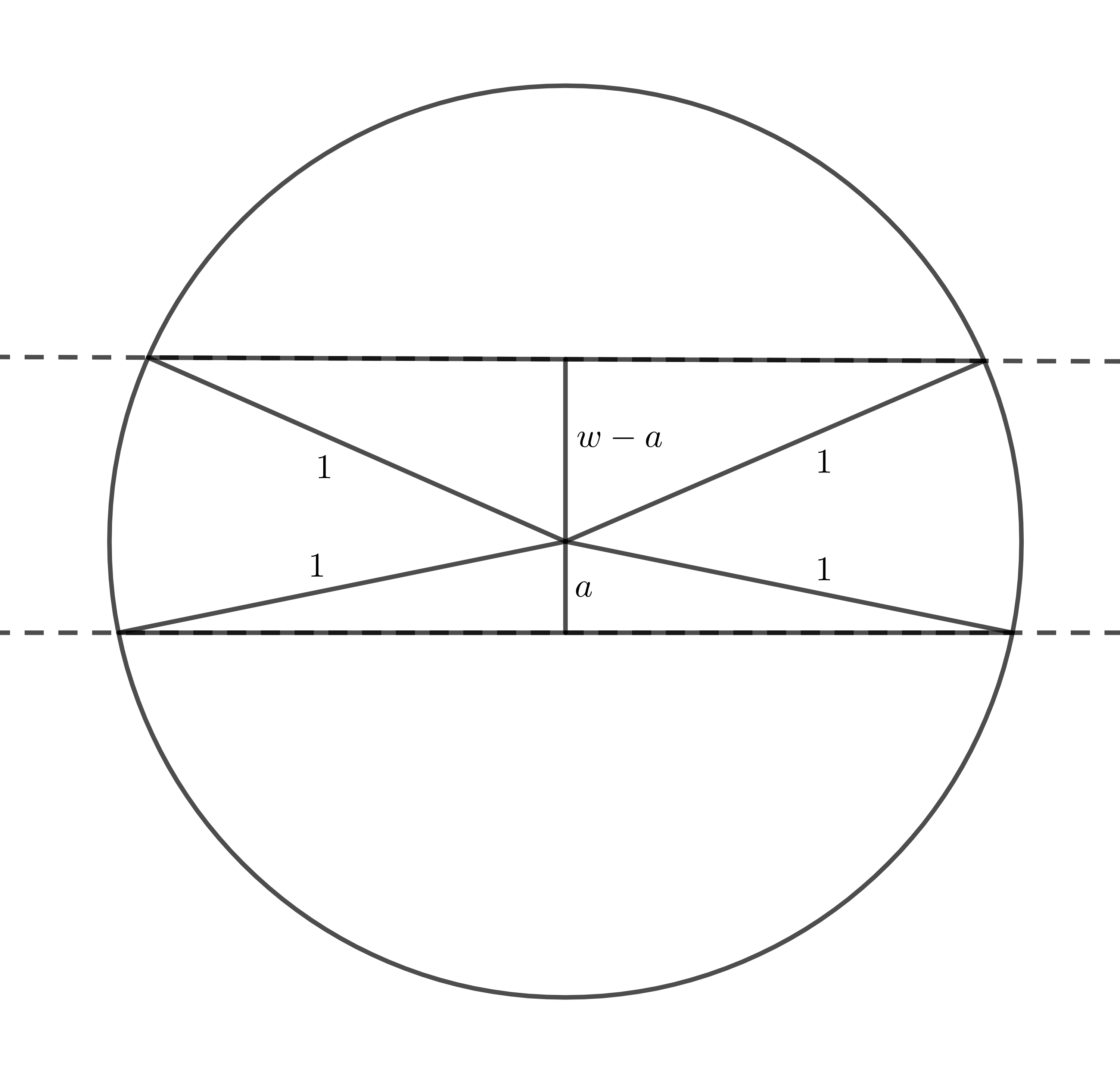}
        \caption{$K$ is contained between the lines and within $\mathbb B$.}
        \label{fig:proof_ARw}
    \end{figure}
    
    The areas of the triangles are $(w-a)\sqrt{1-(w-a)^2}/2$ and $a\sqrt{1-a^2}/2$ (twice each). The area of each of the circular sectors is $(\pi-\arccos(w-a)-\arccos(a))/2$. Thus
    \[
    \begin{split}
    A(K') & =(\pi-\arccos(w-a)-\arccos(a))+(w-a)\sqrt{1-(w-a)^2}+a\sqrt{1-a^2} \\
    & =:f_w(a).
    \end{split}
    \]
    Notice that 
    \[
    f'_w(a)=2\left(\frac{-1+(w-a)^2}{\sqrt{1-(w-a)^2}}+\frac{1-a^2}{\sqrt{1-a^2}}\right).
    \]
    We aim to show that $f'_w(a)\geq 0$ whenever $0\leq a\leq w-a\leq 1$. This is equivalent to show
    \[
    \sqrt{1-a^2}=\frac{1-a^2}{\sqrt{1-a^2}} \geq \frac{1-(w-a)^2}{\sqrt{1-(w-a)^2}}=\sqrt{1-(w-a)^2},
    \]
    for $0\leq a\leq w-a\leq 1$, i.e., that $g(x):=\sqrt{1-x^2}$ is a decreasing function. Since this last statement is true, we conclude that $f'_w(a)\geq 0$, from which
    \[
    A(K)\leq A(K')=f_w(a)\leq f_w\left(\frac w2\right)=\pi-2\arccos\left(\frac{w}{2}\right)+w\sqrt{1-\left(\frac{w}{2}\right)^2}.
    \]
    In order to retrieve \eqref{eq:ARw_NEW_UPPER}, we only need to replace above $K$ by $K/R(K)$ and use the $1$-homogeneity of $w$ as well as the $2$-homogeneity of $A$.

    Evidently, every body of the form $K=\mathbb B\cap\{(x,y)\in\mathbb R^2:|y|\leq w/2\}$ attain equality in \eqref{eq:ARw_NEW_UPPER}, for every $w\in[0,2]$.
\end{proof}

The second inequality we show relating the area, circumradius, and width is one that gives part of the right bound, partly improving \eqref{eq:ARw_OLD} for a certain range of values near $K=\mathbb B$.

\begin{theorem}
    Let $K\in\mathcal K^2$. Then
    \begin{equation}\label{eq:ARw_LOWER}
    \begin{split}
    & \frac{A(K)}{R(K)^2} \geq \\
    & \left(\pi-\arccos\left(\frac{w(K)}{R(K)}-1\right)\right)\left(\frac{w(K)}{R(K)}-1\right)^2+\left(\frac{w(K)}{R(K)}-1\right)\sqrt{2\frac{w(K)}{R(K)}-\frac{w(K)^2}{R(K)^2}}.
    \end{split}
    \end{equation}
    Moreover, equality holds if $K=\mathbb B$.
\end{theorem}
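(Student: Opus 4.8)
The plan is to prove first an auxiliary lower bound for $A$ purely in terms of $r$ and $R$, and then convert it into the stated bound involving $w$ by means of the concentricity inequality \eqref{eq:concentricities}. After a dilatation we may assume $R(K)=1$ and $K\subseteq\mathbb B$; write $r:=r(K)$ and let $I$ be the incenter, so that $I+r\mathbb B\subseteq K$. By Proposition \ref{prop:optcont} there are contact points $x_1,\dots,x_j\in\partial K\cap\partial\mathbb B$ with $2\le j\le 3$ and $0\in\conv\{x_1,\dots,x_j\}$, say $0=\sum_i\lambda_ix_i$ with $\lambda_i\ge 0$ and $\sum_i\lambda_i=1$.

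The key step is to locate one of these contact points far from the incenter. Expanding $\|x_i-I\|^2=\|x_i\|^2-2\langle x_i,I\rangle+\|I\|^2$, using $\|x_i\|=1$ and $\sum_i\lambda_ix_i=0$, gives $\sum_i\lambda_i\|x_i-I\|^2=1+\|I\|^2\ge 1$; hence some contact point, say $x_1$, satisfies $\ell:=\|x_1-I\|\ge 1=R$. Since $I+r\mathbb B\subseteq K$, $x_1\in K$, and $K$ is convex, we obtain $K\supseteq\conv\big((I+r\mathbb B)\cup\{x_1\}\big)$, so that $A(K)$ is at least the area of the convex hull of a disk of radius $r$ and an exterior point at distance $\ell\ge r$. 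Splitting this hull into the major circular sector of angle $2(\pi-\arccos(r/\ell))$ together with the two right triangles cut off by the tangent segments from $x_1$, its area equals $\psi(\ell):=(\pi-\arccos(r/\ell))r^2+r\sqrt{\ell^2-r^2}$. A short computation yields $\psi'(\ell)=r\sqrt{\ell^2-r^2}/\ell\ge 0$, so $\psi$ is nondecreasing, and therefore $A(K)\ge\psi(\ell)\ge\psi(1)=(\pi-\arccos r)r^2+r\sqrt{1-r^2}=:H(r)$, which is the desired $(A,r,R)$ bound.

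It remains to pass from $r$ to $w$. One checks $H'(r)=2r(\pi-\arccos r)+\sqrt{1-r^2}\ge 0$, so $H$ is nondecreasing on $[0,1]$. The concentricity inequality \eqref{eq:concentricities} gives $r\ge w-R=w-1$, whence $A(K)\ge H(r)\ge H(w-1)$; substituting $r=w-1$ and using $\sqrt{1-(w-1)^2}=\sqrt{2w-w^2}$ reproduces exactly the right-hand side of \eqref{eq:ARw_LOWER}, and applying the bound to $K/R(K)$ restores the general homogeneous statement. Equality at $K=\mathbb B$ is immediate, since then $r=R$, $w=2R$, and every inequality invoked is tight. I expect the main obstacle to be precisely the middle step: for a lower bound one cannot simply pass to a superset as in the upper-bound theorem, so one is forced to exhibit a sufficiently large \emph{subset} of $K$; the averaging identity that produces a contact point at distance $\ge R$ from the incenter is the crux, and one must keep track of the condition $\ell\ge r$ so that the convex-hull area formula $\psi$ and its monotonicity are legitimately available.
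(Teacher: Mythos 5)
Your proof is correct and follows essentially the same route as the paper: both use Proposition \ref{prop:optcont} to locate a circumcircle contact point at distance at least $R$ from the incenter, lower-bound $A(K)$ by the area of the convex hull of that point with the inball, and convert $r$ into $w-1$ via the concentricity inequality \eqref{eq:concentricities}. The only (harmless) difference is organizational: you first derive the intermediate $(A,r,R)$ bound $A(K)\geq(\pi-\arccos r)r^2+r\sqrt{1-r^2}$ and then monotonize in $r$, whereas the paper shrinks the inball to radius $w(K)-R(K)$ at the outset.
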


\begin{proof}
    Let us suppose that $K\subset\mathbb B$, $R(K)=1$, and $w:=w(K)$. Using the fact that $w(K)\leq R(K)+r(K)$ (see the left-hand side of \eqref{eq:concentricities}), then $r:=r(K)\geq w-1$. Let $t+r\mathbb B\subset K$ for some $t\in K$. Moreover, by Proposition \ref{prop:optcont} let $p_i\in \partial K\cap \partial\mathbb B$, $i=1,2,3$, be such that $0\in\mathrm{conv}(\{p_i:i=1,2,3\})$. Hence $\sum_{i=1,2,3}\lambda_ip_i=0$, for some $\lambda_i\geq 0$, and thus $\sum_{i=1,2,3}\lambda_i\langle p_i,t\rangle=0$ from which at least one of them, say $p_1$, fulfills $\langle p_1,t\rangle\leq 0$. Using the convexity of $K$ we get that $K'_t:=\mathrm{conv}(\{p_1\}\cup(t+r\mathbb B))\subset K$ and thus $A(K) \geq A(K'_t)$. Notice that the distance from $p_1$ to $t+r\mathbb B$ is at least $2-w$, which occurs when $t=0$. Thus $A(K'_t)\geq A(K'_0)$. It is clear that $K'_0$ decomposes in a circular sector of angle $2\pi-2\arccos(w-1)$ and radius $(w-1)$ and two triangles each of area $(w-1)\sqrt{1-(w-1)^2}/2$ (see Figure \ref{fig:proof_ARw_nonsharp}), and thus
    \[
    A(K) \geq A(K'_0) = (\pi-\arccos(w-1))(w-1)^2+(w-1)\sqrt{2w-w^2}.
    \]
    In order to obtain the result, we simply apply the above inequality to the set $K/R(K)$ and take into account the $1$-homogeneity of $w$ as well as the $2$-homogeneity of $A$.

    Moreover, if we replace $K$ by $\mathbb B$, it is very simple to notice that the inequality above becomes equality.

\end{proof}

\begin{figure}
    \centering
    \includegraphics[width=7cm]{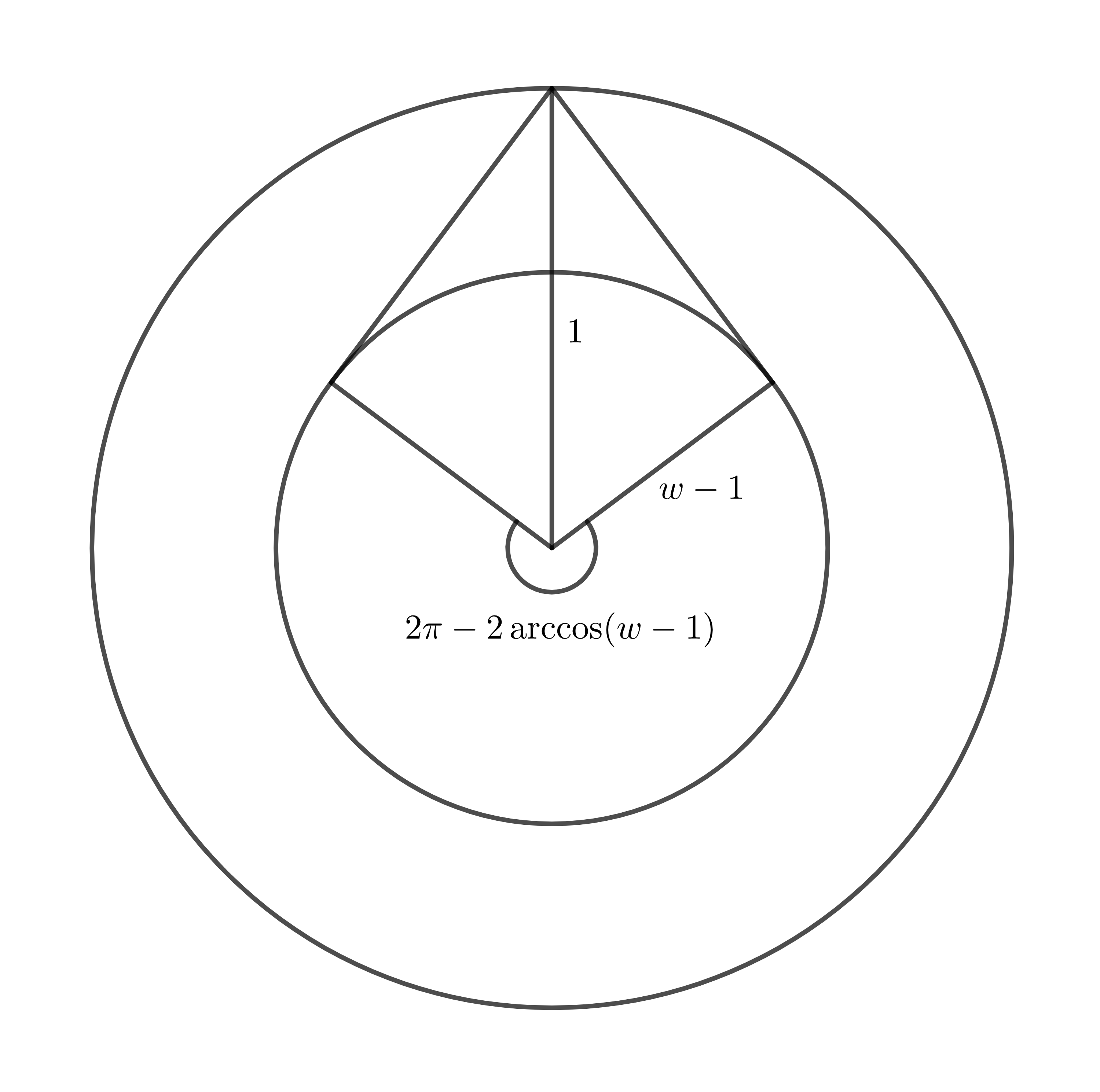}
    \caption{$K$ contains an area at least as large as this figure.}
    \label{fig:proof_ARw_nonsharp}
\end{figure}

\section{(A,r,w) and (p,r,w)}

In this last section we consider both remaining diagrams since the techniques to induce both inequalities are completely analogous.

Let us consider the diagrams $f_{Arw}(\mathcal K^2)$ as well as $f_{prw}(\mathcal K^2)$ where
\[
f_{Arw}:\mathcal K^2\rightarrow[0,\infty)^2\quad\text{with}\quad f_{Arw}(K):=\left(\frac{w(K)}{r(K)},\frac{A(K)}{r(K)^2}\right)
\]
(see Figure \ref{fig:(A,r,w)diagram}) and
\[
f_{prw}:\mathcal K^2\rightarrow[0,\infty)^2\quad\text{with}\quad f_{prw}(K):=\left(\frac{w(K)}{r(K)},\frac{p(K)}{r(K)}\right)
\]
(see Figure \ref{fig:(p,r,w)diagram}).

\subsection{$(A,r,w)$}

The left-hand side inequality of \eqref{eq:steinhagen} induces the left unbounded boundary of the diagram, with extreme point at $K=\mathbb B$. The best known classic upper bounds so far were the inequalities
\begin{equation}\label{eq:Arw_nonsharp}
4(w(K)-2r(K))A(K) \leq w(K)^3, \quad \sqrt{3}(w(K)-2r(K))A(K)\leq w(K)^2r(K)
\end{equation}
(see \cite{Scott}), where the second one attains equality when $K=\mathbb T$. The best lower bound was so far the inequality \eqref{eq:Palineq}, with equality if and only if $K=\mathbb T$ (see Figure \ref{fig:(A,r,w)diagram}).

\begin{figure}
    \centering
    \includegraphics[width=7cm]{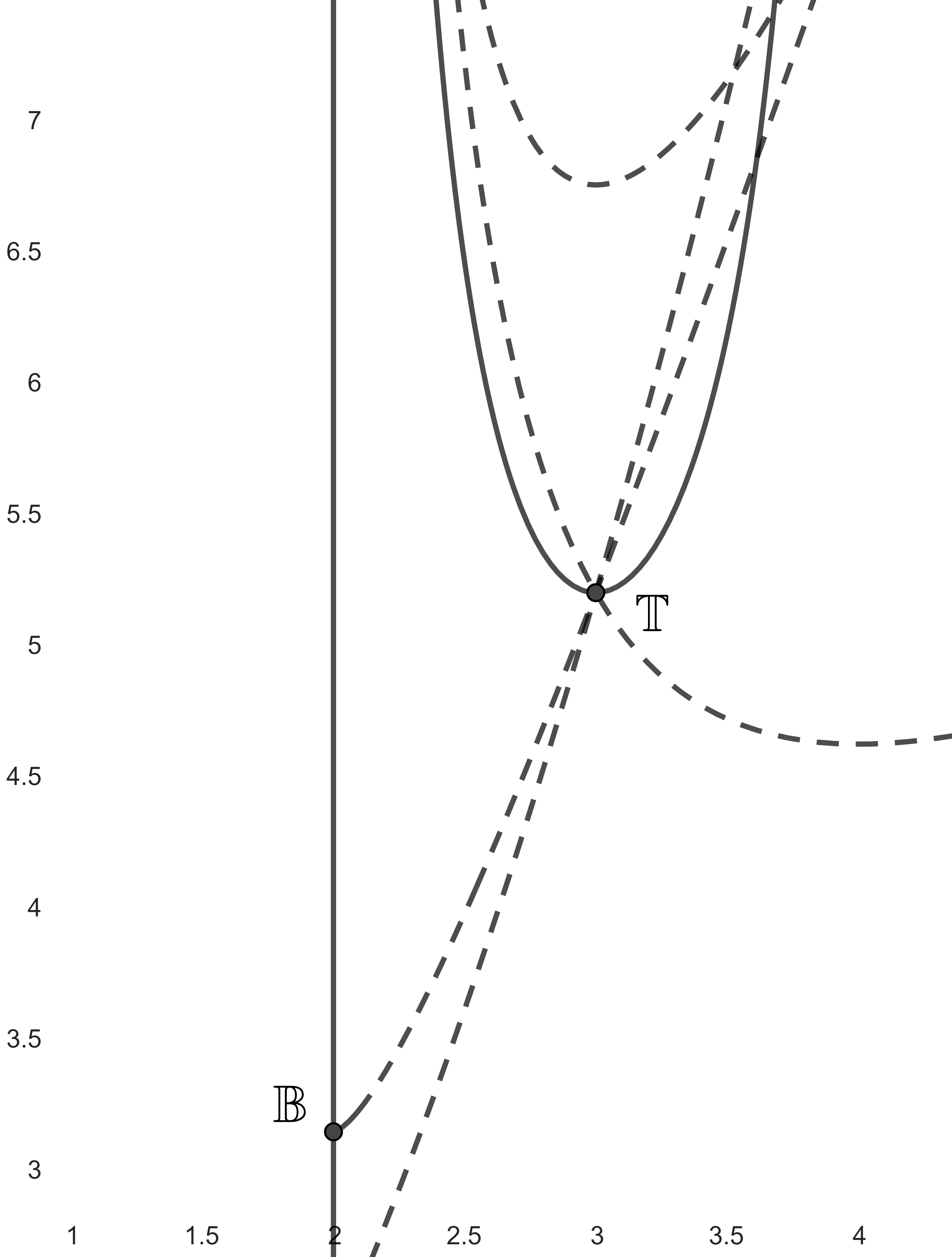}
    \caption{What we know about the diagram $(A,r,w)$, where $x=w/r$ and $y=A/r^2$. The left-hand side of \eqref{eq:steinhagen} gives the left boundary, \eqref{eq:Arw_upper} provides the upper boundary, over it we find both inequalities \eqref{eq:Arw_nonsharp}, and \eqref{eq:Arw_lower} gives the best known lower boundary of the diagram.}
    \label{fig:(A,r,w)diagram}
\end{figure}

\subsection{$(p,r,w)$}

The left-hand side inequality of \eqref{eq:steinhagen} induces the left unbounded boundary of the diagram, with extreme point at $K=\mathbb B$. Moreover, the inequality \eqref{eq:pw} provides part of the lower bound of the diagram, with extreme points at $K=\mathbb B$ and $K=\mathbb{RT}$ (due to \eqref{eq:SteinhagenCW}). The best known classic upper bound so far was the inequality 
\begin{equation}\label{eq:prw_nonsharp}
\sqrt{3}(w(K)-2r(K))p(K) \leq 2w(K)^2
\end{equation}
(see \cite{Scott}), with equality if and only if $K=\mathbb T$.

\begin{figure}
    \centering
    \includegraphics[width=7cm]{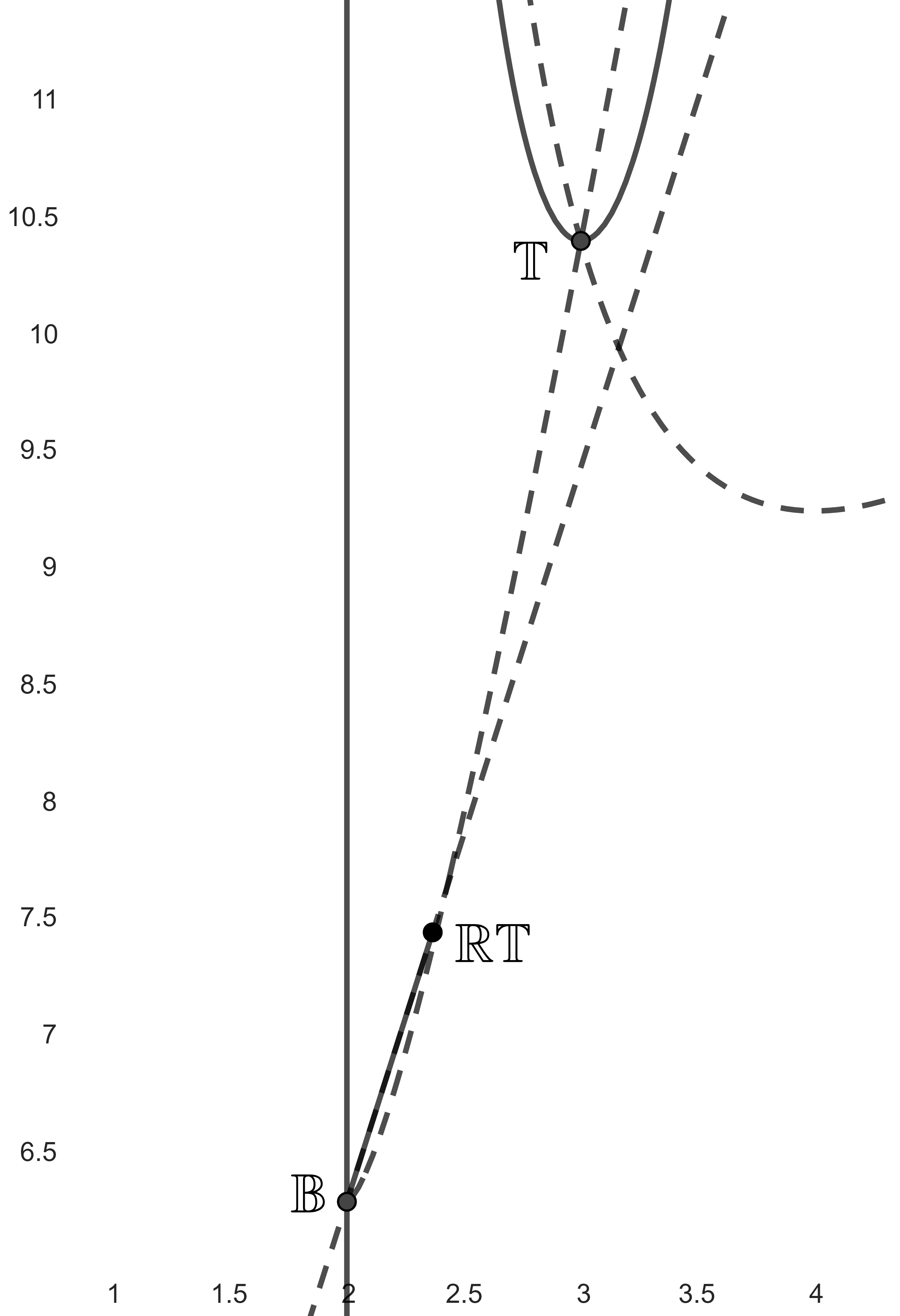}
    \caption{What we know about the diagram $(p,r,w)$. The left-hand side of \eqref{eq:steinhagen} gives the left boundary, \eqref{eq:prw_upper} gives the upper boundary, over it we find \eqref{eq:prw_nonsharp}, \eqref{eq:pw} gives part of the lower boundary, and \eqref{eq:prw_lower} provides an estimate of part of the lower boundary, improving upon \eqref{eq:pw}.}
    \label{fig:(p,r,w)diagram}
\end{figure}

The first result we show is a pair of inequalities which induce the upper boundaries of their respective diagrams.

\begin{theorem}
    Let $K\in\mathcal K^2$. Then
    \begin{equation}\label{eq:Arw_upper}
    (w(K)-2r(K))\sqrt{4r(K)-w(K)}A(K) \leq w(K)^{\frac32}r(K)^2
    \end{equation}
    and
    \begin{equation}\label{eq:prw_upper}
    (w(K)-2r(K))\sqrt{4r(K)-w(K)}p(K) \leq 2r(K)w(K)\sqrt{w(K)}.
    \end{equation}
    Moreover, for every $w\in[2,3]$ there exists an isosceles triangle $K_w$ such that $r(K_w)=1$, $w(K_w)=w$, and $A(K_w)$ (resp. $p(K_w)$) fulfills \eqref{eq:Arw_upper} (resp. \eqref{eq:prw_upper}) with equality.
\end{theorem}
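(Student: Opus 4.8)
The plan is to reduce both inequalities to a single optimization over triangles circumscribed about the incircle. Using the $1$-homogeneity of $w,p$ and the $2$-homogeneity of $A$, I rescale so that $r(K)=1$ and set $w:=w(K)$; by \eqref{eq:steinhagen} then $w\in[2,3]$, and if $w=2$ the factor $w-2r$ vanishes so that both inequalities are trivially $0\le(\text{positive})$. Assume henceforth $w\in(2,3]$ and take the incircle to be $\mathbb B$. Since $w>2$, the incircle cannot touch $\partial K$ at a pair of antipodal points (those would be separated by parallel support lines at distance $2$, forcing a direction of width $2$); hence by the inradius characterization leading to \eqref{eq:r_triangle_T} there is a triangle $T$ with $\mathbb B\subset K\subset T$ and $r(T)=1$. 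By monotonicity under inclusion, $A(K)\le A(T)$, $p(K)\le p(T)$ and $w\le w(T)$, while $w(T)\le 3$ by \eqref{eq:steinhagen} applied to $T$.

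The heart of the argument is the triangle case: for every triangle $T$ with $r(T)=1$ I will show $A(T)\le F(w(T))$ and $p(T)\le 2F(w(T))$, where $F(t):=t^{3/2}/((t-2)\sqrt{4-t})$. Order the sides $a\ge b\ge c$, let $s$ be the semiperimeter, and set $x=s-a\le y=s-b\le z=s-c$. Since $A=rs=s$, Heron's formula gives $A=xyz=x+y+z=s$, while $a=y+z$ and the minimal width equals the smallest altitude $2A/a$, so the width constraint reads $x=\tfrac{w-2}{2}(y+z)$. Substituting this into $xyz=x+y+z$ collapses the two constraints to the remarkably clean pair $yz=\tfrac{w}{w-2}$ and $A=x+y+z=\tfrac{w}{2}(y+z)$. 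With $w$ fixed (hence $yz$ fixed), maximizing $A$ is the same as maximizing $y+z$; as $y+z$ decreases in $y$ for fixed product, the maximum is attained at the smallest admissible $y$, namely $y=x$, i.e.\ $a=b$. A direct substitution at $y=x$ then yields $A=F(w)$, and $p=2s=2A=2F(w)$, with equality exactly for this isosceles triangle (two equal longest sides).

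It remains to transfer the triangle bound back to $K$, for which I will check that $F$ is nonincreasing on $(2,3]$: its logarithmic derivative equals $\tfrac{3}{2t}-\tfrac{1}{t-2}+\tfrac{1}{2(4-t)}$, which is nonpositive and vanishes only at $t=3$. Since $w(T)\ge w$ we then obtain $A(K)\le A(T)\le F(w(T))\le F(w)$, which is exactly \eqref{eq:Arw_upper} after undoing the normalization; the identical chain with $p$ and $2F$ gives \eqref{eq:prw_upper}. Finally, the isosceles triangle produced by the optimization realizes equality for every $w\in(2,3]$ (with $w=2$ as a degenerate limit), which proves sharpness. I expect the main obstacle to be the triangle optimization, specifically finding the substitution $x,y,z$ in which the inradius and width conditions become $yz=\tfrac{w}{w-2}$ and $A=\tfrac{w}{2}(y+z)$; once this is in place the extremal isosceles shape and the exact constant $F(w)$ fall out immediately. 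A secondary but essential point is the monotonicity of $F$, which is what allows the bound, proven for the possibly wider triangle $T$, to be pushed back to $w(K)$.
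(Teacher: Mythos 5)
Your proof is correct, and while it shares the paper's outer skeleton --- pass to a circumscribed triangle $T$ of equal inradius via \eqref{eq:r_triangle_T}, identify the isosceles triangle with two equal longer edges as extremal, and transfer the bound from $w(T)$ back to $w(K)$ using that the extremal function is nonincreasing on $[2,3]$ --- your treatment of the central triangle optimization is genuinely different. The paper proves extremality of the isosceles triangle by a geometric shear: sliding the apex parallel to the base preserves area (Cavalieri) and width, increases perimeter (a one-variable derivative computation), hence decreases the inradius via $rp=2A$, after which the extremal values of $A$ and $p$ are extracted trigonometrically from $\sin\alpha=w/(2a)$ and $\tan(\alpha/2)=1/a$. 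You instead use the Ravi/Heron substitution $x=s-a\le y=s-b\le z=s-c$, under which $r=1$ gives $A=s=x+y+z=xyz$ and the width condition $w=2A/a$ gives $x=\tfrac{w-2}{2}(y+z)$, so the constraints collapse to $yz=\tfrac{w}{w-2}$ and $A=\tfrac{w}{2}(y+z)$; maximizing $y+z$ at fixed product over the admissible range $y\in\bigl[\sqrt{w/(4-w)},\sqrt{w/(w-2)}\bigr]$ lands on the left endpoint $y=x$ (i.e.\ $a=b$) and yields $F(w)=w^{3/2}/((w-2)\sqrt{4-w})$ together with $p=2s=2A$ in one stroke. I verified the details: your logarithmic derivative of $F$ simplifies to $4(t-3)/\bigl(t(t-2)(4-t)\bigr)\le 0$ on $(2,3]$, consistent with the paper's $g_1'$. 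Your route avoids both the shear lemma and the trigonometry and makes the equality configuration transparent, at the price of being special to triangles; the paper's shear is a reusable symmetrization that monotonizes all three normalized functionals simultaneously. Two minor remarks: your observation that $w>2$ forces three touching points (ruling out the antipodal case $j=2$) is a welcome justification the paper leaves implicit, and your caveat that equality at $w=2$ occurs only as a degenerate limit is accurate --- that is a blemish in the theorem's statement, not in your argument.
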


\begin{proof}
    Let $T$ be a triangle such that $K\subset T$ with $r(K)=r(T)$ (see \eqref{eq:r_triangle_T}). Of course we also have that $w(K)\leq w(T)$, $A(K)\leq A(T)$, and
    $p(K)\leq p(T)$ and hence
    \[
    \frac{w(T)}{r(T)} \geq \frac{w(K)}{r(K)}, \quad 
    \frac{A(T)}{r(T)^2}\geq \frac{A(K)}{r(K)^2} \quad\text{and}\quad
    \frac{p(T)}{r(T)}\geq \frac{p(K)}{r(K)}.
    \] 
    Let us assume now that $(\pm D/2,0) \in T$, with $D:=D(T)$, and thus the width of $T$ is attained in the direction $e_2$. 

    In the next step, we will show that replacing the vertex $(x_0,w(T))$ of $T$, $x_0\in\mathbb R$, not belonging to $[(-D/2,0),(D/2,0)]$ by $(x_1,w(T))$ such that the new triangle $T'$ is an isosceles triangle with $\|(x_1,w(T))-(-D/2,0)\|=D$, has the properties $r(T')\leq r(T)$, $A(T')=A(T)$, and $p(T')\geq p(T)$. 
    To do so, let us consider the points $(-a,0),(a,0),(b,h)$, $a,b,h\geq 0$, and let 
    \[
    \begin{split}
    f_{a,h} & :=\|(b,h)-(-a,0)\|+\|(b,h)-(a,0)\| \\
    & =\sqrt{(b+a)^2+h^2}+\sqrt{(b-a)^2+h^2}.
    \end{split}
    \]
    Since
    \[
    f_{a,h}'(b)=\frac{b-a}{\sqrt{(b-a)^2+h^2}}+\frac{b+a}{\sqrt{(b+a)^2+h^2}},
    \]
    we will show that $f_{a,h}'(b)\geq 0$ for every $b\geq 0$, thus showing in particular that $p(T')\geq p(T)$. Evidently, $f_{a,h}'(b)\geq 0$ whenever $b\geq a$; thus it only remains to show it when $b\in[0,a]$. In that case, it is equivalent to show that
    \[
\sqrt{1+\frac{h^2}{(a-b)^2}} \geq \sqrt{1+\frac{h^2}{(a+b)^2}},
    \]
    which holds if and only if $(a+b)^2 \geq (a-b)^2$, which is equivalent to the valid inequality $2b\geq 0$.

    Thus, replacing $T$ by the isosceles $T'$ would give us $p(T')\geq p(T)$ (see above), and by Fubini's theorem that $A(T')=A(T)$. Hence, using the inradius, perimeter, and area formula of triangles $r(T)p(T)=2A(T)=2A(T')=r(T')p(T')$ implies that $r(T')\leq r(T)$. Evidently, we also have that $w(T')=w(T)$. Thus
    \[
    \frac{w(T')}{r(T')} \geq \frac{w(T)}{r(T)},\quad \frac{A(T')}{r(T')^2}\geq \frac{A(T)}{r(T)^2} \quad\text{and}\quad \frac{p(T')}{r(T')} \geq \frac{p(T)}{r(T)}.
    \]
    
    Wrapping up all the above, we have proven that for every $K$ there exists an isosceles triangle $T'$ with two longer edges with 
     \[
    \frac{w(T')}{r(T')} \geq \frac{w(K)}{r(K)}, \quad \frac{A(T')}{r(T')^2}\geq \frac{A(K)}{r(K)^2}  \quad\text{and}\quad \frac{p(T')}{r(T')} \geq \frac{p(K)}{r(K)}.
    \]

\begin{figure}
    \centering
    \includegraphics[width=7cm]{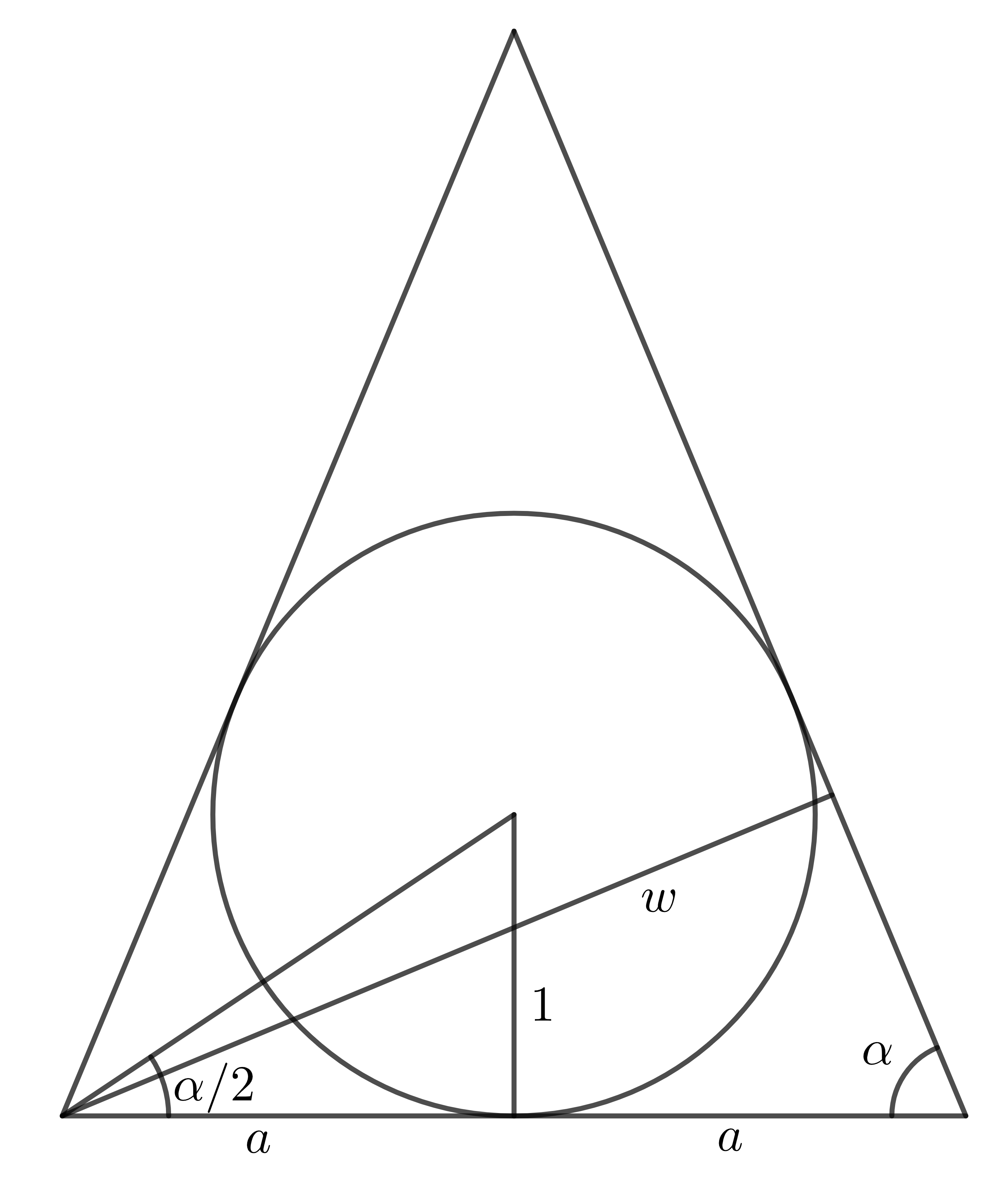}
    \caption{An isosceles triangle of one single shorter edge of length $2a$.}
    \label{fig:proof_Aprw_upper}
\end{figure}

    Finally, let $\alpha$ be one of the two equal angles of $T'$, and let $a$ be half of the length of the shorter edge of $T'$ (see Figure \ref{fig:proof_Aprw_upper}). Moreover, let us call $r:=r(T')=1$, $w:=w(T')$, $A:=A(T')$, and $p:=p(T')$. Using standard trigonometric arguments we obtain that
    \[
    \sin\alpha=\frac{w}{2a}\quad\text{and}\quad \tan\frac{\alpha}{2}=\frac1a.
    \]
    From there we easily obtain that
    \[
    a=\sqrt{\frac{w}{4-w}}\quad\text{and}\quad \alpha=2\arctan\left(\sqrt{\frac{4-w}{w}}\right)
    \]
    (notice that $w\leq 3$ by Steinhagen's theorem \eqref{eq:steinhagen}). The base of $T'$ has length $2a$ and the corresponding height equals to $h=a\tan\alpha$. 
    
    On the one hand, we have that
    \[
    \begin{split}
    A & =\frac12 (2a)h = \left(\sqrt{\frac{w}{4-w}}\right) \left(\sqrt{\frac{w}{4-w}}\tan\left(2\arctan\left(\sqrt{\frac{4-w}{w}}\right)\right)\right) \\
    & =\frac{w\sqrt{w}}{(w-2)\sqrt{4-w}} =: g_1(w).
    \end{split}
    \]
    Since $g_1'(w)=\frac{4(w-3)\sqrt{w}}{(w-2)^2(4-w)^\frac32} \leq 0$ for every $w\in[2,3]$, thus $g_1$ is non-increasing. Hence, assuming that $r(K)=r=1$ too, we conclude that
    \[
    A(K) \leq A(T')=\frac{w(T')\sqrt{w(T')}}{(w(T')-2)\sqrt{4-w(T')}} \leq \frac{w(K)\sqrt{w(K)}}{(w(K)-2)\sqrt{4-w(K)}}.
    \]
    The inequality is then retrieved by applying the inequality above to $K/r(K)$ and using the $1$-homogeneity of $w$ as well as the $2$-homogeneity of $A$.

    On the other hand
    \[
    \begin{split}
     p & = 2a+2\sqrt{a^2+h^2} \\
     & = 2\left(\sqrt{\frac{w}{4-w}}+\sqrt{\frac{w}{4-w}+\frac{w}{4-w}\tan^2\left(2\arctan\left(\sqrt{\frac{4-w}{w}}\right)\right)}\right) \\
     & =2\frac{w\sqrt{w}}{(w-2)\sqrt{4-w}}=:g_2(w).
     \end{split}
    \]
    Since
    \[
    g'_2(w)=\frac{8(w-3)\sqrt{w}}{(w-2)^2(4-w)^{\frac32}},
    \]
    thus $g_2'(w)\leq 0$ for every $w\in[2,3]$, hence getting that $g_2$ is decreasing on $w\in[2,3]$. Therefore
    \[
    \begin{split}
        p(K) \leq p(T')=2\frac{w(T')\sqrt{w(T')}}{(w(T')-2)\sqrt{4-w(T')}}
         \leq 2\frac{w(K)\sqrt{w(K)}}{(w(K)-2)\sqrt{4-w(K)}}.
        \end{split}
    \]
    In order to obtain the second result, we simply apply the inequality above to $K/r(K)$ and use the $1$-homogeneity of the functionals $p$ and $w$.

    Moreover, equality holds on each of the inequalities above for every isosceles triangle with two longer equal edges.
    \end{proof}

    We end this section by providing new inequalities that bound from below each of their diagrams. Note that \eqref{eq:Arw_lower} already provides a strenghening of P\'al's inequality \eqref{eq:Palineq}, whereas \eqref{eq:prw_lower} improves \eqref{eq:pw} within a range of values near $K=\mathbb T$.

\begin{theorem}
    Let $K\in\mathcal K^2$. Then
    \begin{equation}\label{eq:Arw_lower}
    \frac{A(K)}{r(K)^2} \geq \pi +3\left(\sqrt{\frac{w(K)^2}{r(K)^2}-\frac{2w(K)}{r(K)}}+\arcsin\left(\frac{r(K)}{w(K)-r(K)}\right)-\frac{\pi}{2}\right)
    \end{equation}
    and
    \begin{equation}\label{eq:prw_lower}
    p(K) \geq 6\left(\sqrt{w(K)^2-2w(K)}+r(K)\arcsin\left(\frac{r(K)}{w(K)-r(K)}\right)\right)-\pi r(K).
    \end{equation}
    Moreover, equality holds in both inequalities when $K=\mathbb B$ or $K=\mathbb T$.
\end{theorem}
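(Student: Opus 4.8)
The plan is to bound both functionals from below by those of a symmetric \emph{rounded triangle} $C_s:=\conv\bigl(\B\cup\{s(-x_1),s(-x_2),s(-x_3)\}\bigr)$, whose area and perimeter---written as functions of $s=w(K)/r(K)-1$---reproduce the two right-hand sides exactly, and then to argue that $C_s$ is extremal.

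First I would normalize $r(K)=1$, so that the incircle is $\B$ centred at the origin; the general statement then follows by applying the result to $K/r(K)$ and using the homogeneities of $A$, $p$, $r$, $w$. By the inradius analogue of Proposition~\ref{prop:optcont} there are contact points $x_1,\dots,x_j\in\partial K\cap\mathbb S^1$ with $j\in\{2,3\}$ and $0\in\conv\{x_i\}$. The case $j=2$ forces $K$ into a slab of width $2$, hence $w(K)=2$ and $A(K)\ge\pi$, $p(K)\ge2\pi$ with equality only at $\B$; so assume $j=3$. The crucial elementary point is that the width activates each contact direction: the supporting line with outer normal $x_i$ is tangent to $\B$, hence at distance $1$ from the origin, while its parallel companion with normal $-x_i$ must lie at distance at least $w-1$ (the two being at least $w$ apart). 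Thus $K$ contains a point $q_i$ with $\langle q_i,-x_i\rangle\ge w-1$, and among all such placements the hull $\conv(\B\cup\{q_1,q_2,q_3\})$ is smallest in both area and perimeter when each $q_i$ is the foot $s(-x_i)$ with $s=w-1$; this yields $A(K)\ge A(C_s)$ and $p(K)\ge p(C_s)$. The values of $A(C_s)$ and $p(C_s)$ are then a routine tangent-cone computation: each vertex $s(-x_i)$ issues two tangent segments to $\B$ of length $\sqrt{s^2-1}$ subtending a central half-angle $\arccos(1/s)$, contributing a kite of area $\sqrt{s^2-1}$ minus a circular sector of area $\arccos(1/s)$, and replacing a boundary arc of length $2\arccos(1/s)$ by a polygonal path of length $2\sqrt{s^2-1}$. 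Summing the three contributions over the disk and substituting $\arccos(1/s)=\tfrac\pi2-\arcsin\!\bigl(1/(w-1)\bigr)$ and $\sqrt{s^2-1}=\sqrt{w^2-2w}$ reproduces the right-hand sides of \eqref{eq:Arw_lower} and \eqref{eq:prw_lower}. Setting $s=1$ (so $w=2$) gives $\B$ and $s=2$ (so $w=3$) gives the equilateral triangle $\mathbb T$, which settles the two equality cases.

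The hard part is the passage to the \emph{symmetric} $C_s$. The estimate above uses only the three contact directions, and $A(C_s),p(C_s)$ in fact depend on the angular positions of the $x_i$: they equal the stated right-hand sides precisely when the $x_i$ are spread at $120^\circ$, but if two contact directions cluster, the corresponding tangent cones overlap and $C_s$ has strictly smaller area and perimeter, so the three-point estimate by itself undershoots the claimed bound. What rescues the inequality is the width in the directions transverse to the $x_i$: a body with clustered contacts but width $w$ must still be broad across, so its area and perimeter cannot really drop below those of the symmetric $C_s$. I expect to handle this either by a $3$-fold symmetrization that redistributes the contacts to $120^\circ$ without increasing either functional while preserving $r=1$ and keeping the width at least $w$, or by splitting the range of $w$: for $w$ near $3$ the Steinhagen rigidity \eqref{eq:steinhagen} pins the contacts near those of $\mathbb T$, making the three-point reduction essentially sharp, while for smaller $w$ the surplus over the right-hand side is large enough to be absorbed by a coarser estimate using $\B\subseteq K$ and the concentricity inequalities \eqref{eq:concentricities}. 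Making this minimality rigorous, rather than the clean evaluation of $A(C_s)$ and $p(C_s)$, is where the genuine difficulty lies.
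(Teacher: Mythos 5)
Your computation of the area and perimeter of the symmetric body $C_s$ is correct and does reproduce the right-hand sides of \eqref{eq:Arw_lower} and \eqref{eq:prw_lower} (with $s=w-1$ after normalizing $r=1$), and your equality-case check is fine. But the proposal has a genuine gap, which you yourself flag: the reduction $A(K)\geq A(C_s)$, $p(K)\geq p(C_s)$ is only valid when the three caps $\conv(\{q_i\}\cup\mathbb B)\setminus\mathbb B$ have disjoint interiors, and if you anchor the points $q_i$ to the directions $-x_i$ of the incircle contact points, this fails when two contact directions cluster (which is compatible with $0\in\conv\{x_1,x_2,x_3\}$, e.g.\ $x_1,x_2$ both near $e_1$ and $x_3$ near $-e_1$). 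In that regime $A(C_s)$ and $p(C_s)$ drop strictly below the claimed bounds, so the three-point estimate cannot close the argument. Neither of your proposed rescues is carried out: a $3$-fold symmetrization that preserves $r=1$, does not increase $A$ or $p$, \emph{and} keeps the width at least $w$ is not a standard operation and would need to be constructed, while the case split near $w=3$ versus small $w$ is only sketched. As written, the proof establishes the inequality only under an unproved non-overlap hypothesis.

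The paper sidesteps this difficulty entirely by a different anchoring. It circumscribes a triangle $T\supset K$ with $r(T)=r(K)=1$ as in \eqref{eq:r_triangle_T}, whose edges $L_1,L_2,L_3$ are tangent to $\mathbb B$, and measures the width of $K$ against each $L_i$: since $L_i$ supports $K$, there is a point $p_i\in K$ at distance at least $w$ from $L_i$, hence at distance at least $w-1$ from the incenter. The key structural point is that such a $p_i$ must lie in the component of $T\setminus\mathbb B$ adjacent to the vertex \emph{opposite} $L_i$ (every point of the other two components is within distance $2\leq w$ of $L_i$), so the three points land in three distinct components and the three caps $\conv(\{p_i\}\cup\mathbb B)\setminus\mathbb B\subset T\setminus\mathbb B$ are automatically pairwise disjoint, with no symmetrization needed. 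Since the area and boundary-length contribution of each cap depends only on $\|p_i\|\geq w-1$ and not on its angular position, summing the three caps plus $\mathbb B$ gives exactly the stated right-hand sides. If you replace your contact-direction anchoring by this circumscribed-triangle anchoring, the rest of your computation goes through verbatim and the proof is complete.
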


\begin{proof}
    Let $T$ be a triangle with $K\subset T$ such that $r(T)=r(K)$ (see \eqref{eq:r_triangle_T}). Let us suppose after a translation and dilatation that $r(K)=1$ with $\mathbb B\subset K$. Let $L_i$ be the edges of $T$, $i=1,2,3$. Since $w(T)\geq w(K)=:w$, there exists $p_i\in K\notin\mathbb B$ at distance $\rho_i\geq w(K)$ from $L_i$, $i=1,2,3$. Notice also that each $p_i$ belongs to one of the three disconnected regions of $T\setminus\mathbb B$. Using the convexity of $K$, in particular we would have that \[
    K':=\bigcup_{i=1,2,3}(\mathrm{conv}(\{p_i\}\cup\mathbb B) \setminus \mathbb B)\cup\mathbb B \subset K
    \]
    with $\mathrm{int}(\mathrm{conv}(\{p_i\}\cup\mathbb B)\cap\mathrm{int}(\mathrm{conv}(\{p_j\}\cup\mathbb B)=\emptyset$, for every $i\neq j$. Thus $A(K) \geq A(K')$ and $p(K)\geq p(K')$.
    
    We first compute $A(K')$. We have that
    \[
    A(K')=A(\mathbb B)+\sum_{i=1,2,3}A(\mathrm{conv}(\{p_i\}\cup\mathbb B) \setminus \mathbb B).
    \]
    Evidently, since the distance from $p_i$ to $\mathbb B$ is at least $w-2$, we can replace $p_i$ by $p_i'$ such that the distance from $p_i'$ to $\mathbb B$ is exactly equal to $w-2$. We now compute $A(\mathrm{conv}(\{p_i\}\cup\mathbb B) \setminus \mathbb B)$. To do so, notice that the region can be seen as a rectangular triangle of hipotenuse $w-2$ and leg equal to $1$ of common angle $\alpha$ minus the corresponding ball sector from $\mathbb B$ of angle $\alpha$. On the one hand, the area of the triangle equals $\frac{\sqrt{(w-1)^2-1}}{2}$, and on the other hand the area of the ball sector equals $\frac{\alpha}{2}$, where $\alpha=\frac\pi2-\arcsin\left(\frac{1}{w-1}\right)$. Adding it all, we would conclude that
    \[
    A(K) \geq A(K')=\pi + 6\left(\frac{\sqrt{(w-1)^2-1}}{2}-\frac{\frac\pi2-\arcsin\left(\frac{1}{w-1}\right)}{2}\right).
    \]
    
    We now compute $p(K')$. Using the same ideas than above, it is clear that
    \[
    \begin{split}
    p(K) & \geq p(K')=6\sqrt{(w-1)^2-1}+2\pi\frac{2\pi-6\alpha}{2\pi} \\
     & = 6\left(\sqrt{w^2-2w}+\arcsin\left(\frac{1}{w-1}\right)\right)-\pi.
    \end{split}
    \]
    In order to obtain \eqref{eq:Arw_lower} and \eqref{eq:prw_lower}, we simply apply the inequalities above to the set $K/r(K)$ and use the $1$-homogeneity of the functionals $p$ and $w$ as well as the $2$-homogeneity of $A$.

    Moreover, if we replace $K$ by $\mathbb B$ or $\mathbb T$ then \eqref{eq:Arw_lower} and \eqref{eq:prw_lower} become equality.
\end{proof}

\section{Higher dimensions}

Some of the inequalities above can be naturally extended to higher dimensions with not much additional effort. To this aim, let $\mathbb S^{n-1}:=\partial\mathbb B_n$. Moreover, let $b(K)$ be the \emph{mean width} of $K\in\mathcal K^n$, i.e. $b(K)=\int_{\mathbb S^{n-1}}w(K;u)/2d\sigma(u)$, where $w(K;u)$ is the \emph{directional width} of $K$ in the direction $u\in\mathbb S^{n-1}$ and $\sigma$ is the \emph{unitary measure} over $\mathbb S^{n-1}$. Let $S(K)$ be the \emph{surface area measure} of $K$. Remember that $V_n(\mathbb B_n)=\pi^\frac{n}{2}/\Gamma(n/2+1)$, where $\Gamma$ stands for the \emph{Gamma function}, and that $S(\mathbb B_n)=n V(\mathbb B_n)$ for every $n\in\mathbb N$. We will stress the subindex in the volume $V_n$ when applied to some $K\in\mathcal K^n$ in case it helps in avoiding confusion.

If $L$ is an $i$-dimensional lineal subspace of $\mathbb R^n$, let $P_L$ be the \emph{orthogonal projection} onto $L$. Moreover, if $\mathrm{lin}(K)$ is $2$-dimensional, then let $p(K)$ be the perimeter of $K$ measured within $\mathrm{lin}(K)$. Notice that for every $K\in\mathcal K^n$ we have that
\begin{equation}\label{eq:b_and_p}
\begin{split}
b(K) & =\frac{1}{S(\mathbb B_n)}\int_{\mathbb S^{n-1}}\frac{w(K;u)}{2}d\sigma_{n-1}(u) \\
 & =  \frac{1}{2\pi S(\mathbb B_{n-1})}   \int_{\mathbb S^{n-2}}\int_0^{\pi} w(K;\cos\theta v+\sin\theta e_n)d\theta d\sigma_{n-2}(v) \\
 & = \frac{1}{2\pi S(\mathbb B_{n-1})} \int_{\mathbb S^{n-2}} p(P_{\mathrm{lin}(\{v,e_n\})}(K)) d\sigma_{n-2}(v).
\end{split}
\end{equation}

The first result is an extension of \eqref{eq:LB(prD)} when replacing $p(K)$ by $b(K)$.

\begin{theorem}
Let $K\in\mathcal K^n$. Then
\begin{equation}\label{eq:brD_NEW_LOWER}
    b(K) \geq \frac{2}{\pi} \left(r(K)\left(\frac\pi2-\arctan\left(\sqrt{\frac{D(K)^2}{r(K)^2}-1}\right)\right)+\sqrt{D(K)^2-r(K)^2}\right).
\end{equation}
Moreover, for every $r\in[0,1]$, there exists $K_r\in\mathcal K^n$ with $r(K_r)=r$, $D(K_r)=2$, and such that $b(K_r)$ fulfills \eqref{eq:brD_NEW_LOWER} with equality.
\end{theorem}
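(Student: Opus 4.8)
The plan is to reduce the statement to the planar inequality \eqref{eq:LB(prD)} through the integral representation \eqref{eq:b_and_p}, which expresses $b(K)$ as an average over $v\in\mathbb S^{n-2}$ of the perimeters $p(P_{\mathrm{lin}(\{v,e_n\})}(K))$ of the planar projections of $K$ onto the $2$-planes containing $e_n$. By the $1$-homogeneity of $b$, $r$ and $D$ we may assume $D(K)=2$, and since $b$ is invariant under rigid motions we rotate $K$ so that one of its diametral segments is parallel to $e_n$, that is, there are $a,b\in K$ with $b-a=2e_n$. This alignment is the crux of the whole argument.

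Fix $v\in\mathbb S^{n-2}$ and put $L:=\mathrm{lin}(\{v,e_n\})$. The planar body $P_L(K)\subset L$ then satisfies $D(P_L(K))=2$ and $r(P_L(K))\ge r:=r(K)$. For the diameter, projections are $1$-Lipschitz, so $D(P_L(K))\le D(K)=2$; conversely $P_L(b)-P_L(a)=P_L(2e_n)=2e_n$ because $e_n\in L$, so the image of the diametral segment still has length $2$ and $D(P_L(K))\ge 2$. For the inradius, if $c+r\mathbb B_n\subset K$ then its image $P_L(c)+r\mathbb B_L$ (with $\mathbb B_L$ the unit disk of $L$) is a disk of radius $r$ inside $P_L(K)$, so $r(P_L(K))\ge r$. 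Feeding $D(P_L(K))=2$ and $\rho_v:=r(P_L(K))\ge r$ into \eqref{eq:LB(prD)} gives
\[
p(P_L(K))\ \ge\ \phi(\rho_v),\qquad \phi(\rho):=4\rho\left(\frac{\pi}{2}-\arctan\sqrt{\frac{1}{\rho^2}-1}\right)+4\sqrt{1-\rho^2}.
\]

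It remains to discard the unknown $\rho_v$ in favour of $r$. Using $\tfrac{\pi}{2}-\arctan\sqrt{1/\rho^2-1}=\arcsin\rho$ for $\rho\in[0,1]$, the bound simplifies to $\phi(\rho)=4\rho\arcsin\rho+4\sqrt{1-\rho^2}$, whence $\phi'(\rho)=4\arcsin\rho\ge 0$; thus $\phi$ is nondecreasing and $\phi(\rho_v)\ge\phi(r)$ for every $v$. Inserting this into \eqref{eq:b_and_p} and using $\int_{\mathbb S^{n-2}}d\sigma_{n-2}=S(\mathbb B_{n-1})$, the integral collapses to the constant $\tfrac{1}{2\pi}\phi(r)=\tfrac{2}{\pi}\bigl(r\arcsin r+\sqrt{1-r^2}\bigr)$; rewriting the arcsine and undoing the normalisation $D=2$ by $1$-homogeneity returns \eqref{eq:brD_NEW_LOWER}. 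Equality holds for $K_r:=\mathrm{conv}([-e_n,e_n]\cup r\mathbb B_n)$, since each of its projections $P_L(K_r)$ is exactly the planar extremiser $\mathrm{conv}([-e_n,e_n]\cup r\mathbb B_L)$ of \eqref{eq:LB(prD)}, so every inequality above becomes an equality.

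The main obstacle, and the step that drives the proof, is controlling the diameters of the projections. Since \eqref{eq:LB(prD)} is increasing in $D$, the generic loss of diameter under projection would leave the diameters of the $P_L(K)$ uncontrolled and the average impossible to evaluate; orienting $e_n$ along a diametral direction is precisely what forces $D(P_L(K))=2$ for all $v$ simultaneously and removes the difficulty. The complementary facts that projection can only increase the inradius and that $\phi'\ge 0$ then both point in the favourable direction, and the remaining arcsine simplification and homogenisation are routine.
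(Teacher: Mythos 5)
Your proof is correct, but it reaches the inequality by a genuinely different reduction than the paper's. The paper first replaces $K$ by an explicit inner body: it takes the convex hull of a diametral segment and the inball, applies successive Steiner symmetrizations to centre the ball on the axis, and lands on the body of revolution $K'=\mathrm{conv}([-e_1,e_1]\cup r\mathbb B_n)$ with the same $r$ and $D$ and smaller mean width; only then does it invoke \eqref{eq:b_and_p}, where every axial projection of $K'$ is exactly the planar extremiser, so \eqref{eq:LB(prD)} enters only at its equality case. You instead apply \eqref{eq:b_and_p} directly to $K$, after the single normalisation of aligning a diametral chord with $e_n$; the price is that the projections $P_L(K)$ have varying inradii $\rho_v\ge r$, which you absorb via the monotonicity $\phi'(\rho)=4\arcsin\rho\ge 0$ of the planar bound (the same computation the paper uses for \eqref{eq:LB(pRw)}). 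Your route buys a shorter argument that avoids the $n$-dimensional Steiner symmetrization and the attendant checks that $r$ and $D$ are preserved and $b$ decreases at each step, at the cost of needing \eqref{eq:LB(prD)} for arbitrary projections plus the monotonicity observation; the two facts you lean on --- that a diametral chord parallel to $e_n$ forces $D(P_L(K))=2$ for every plane $L$ containing $e_n$, and that orthogonal projection cannot decrease the inradius --- are both correct. One remark that applies to your derivation and to the paper's proof equally: both yield $b(K)\ge\frac{2}{\pi}\bigl(r\arcsin r+\sqrt{1-r^2}\bigr)$ when $D(K)=2$, which is indeed the value of $b$ on the claimed extremisers, but it is \emph{not} what the displayed right-hand side of \eqref{eq:brD_NEW_LOWER} gives at $D=2$ (test $K=\mathbb B_n$: the display evaluates to $\frac13+\frac{2\sqrt3}{\pi}>1=b(\mathbb B_n)$); the arguments of the arctangent and of the last square root in \eqref{eq:brD_NEW_LOWER} should read $\frac{D(K)^2}{4r(K)^2}-1$ and $\frac{D(K)^2}{4}-r(K)^2$, so your final step of undoing the normalisation is right but the target display is misprinted.
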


\begin{proof}
    After a translation and rescaling of $K$, let us assume that $[-e_1,e_1] \subset K$ with $D(K)=2$. Moreover, let $P+r(K)\mathbb B_n\subset K$, for some $P\in K$. 
    
    It is straighforward to check that $[-e_1,e_1]\cap(P+r(K)\mathbb B_n) \neq\emptyset$ (as in the proof of \eqref{eq:LB(prD)}). 
    By convexity, we have that $K_1:=\mathrm{conv}([-e_1,e_1]\cup(P+r(K)\mathbb B_n)) \subset K$, and thus by monotonicity of $b$ then $b(K_1)\leq b(K)$. Notice that $D(K_1)=D(K)=2$ and $r(K_1)=r(K)$. If we now proceed as in the proof of \eqref{eq:LB(prD)} by applying successive Steiner symmetrizations with respect to the hyperplanes $e_1,e_2,\dots,e_n$, we would easily conclude that $p(K) \geq p(K')$, where $K':=\mathrm{conv}([-e_1,e_1]\cup (r(K)\mathbb B_n))$, with $D(K')=D(K)=2$ and $r(K')=r(K)$.

    Now observe that by \eqref{eq:b_and_p} and \eqref{eq:LB(prD)} applied within $\mathrm{lin}(\{v,e_n\})$, then
    \[
    \begin{split}
    b(K) & \geq b(K') =  \frac{1}{2\pi S(\mathbb B_{n-1})}  \int_{\mathbb S^{n-2}} p(K'\cap\mathrm{lin}(\{v,e_n\}))d\sigma_{n-2}(v) \\
    & \geq \frac{2}{\pi}\Bigg[r(K'\cap\mathrm{lin}(\{v,e_n\}))\left(\frac\pi2-\arctan\left(\frac{\sqrt{1-r(K'\cap\mathrm{lin}(\{v,e_n\}))^2}}{r}\right)\right) \\
    & +\sqrt{1-r(K'\cap\mathrm{lin}(\{v,e_n\}))^2} \Bigg] \\
    & = \frac{2}{\pi} \Bigg[r(K)\left(\frac\pi2-\arctan\left(\frac{\sqrt{1-r(K)^2}}{r}\right)\right)
    +\sqrt{1-r(K)^2} \Bigg],
    \end{split}
    \]
    since $P_{\mathrm{lin}(\{v,e_n\})}(K')=K'\cap \mathrm{lin}(\{v,e_n\})$ and $r(K'\cap \mathrm{lin}(\{v,e_n\}))=r(K')$ for every $v\in\mathbb S^{n-2}$.
    
    The inequality \eqref{eq:brD_NEW_LOWER} is then retrieved by applying the above inequality to the set $K/D(K)$ and using the $1$-homogeneity of the functionals $b$ and $r$.
    
    Evidently, equality holds whenever $K=\mathrm{conv}([-e_1,e_1]\cup(r\mathbb B_n))$, for every $r\in[0,1]$.
\end{proof}

The next result is an extension of \eqref{eq:pRw_NEW} to higher dimensions, replacing $p(K)$ by $b(K)$.

\begin{theorem}
    Let $K\in\mathcal K^n$. Then
    \[
    b(K) \leq \frac{1}{\pi} R(K)\left(\pi+2\sqrt{1-\frac{w(K)^2}{4R(K)^2}}-2\arccos\left(\frac{w(K)}{2R(K)}\right)\right).
    \]
    Moreover, for every $w\in[0,2]$, there exists $K_w\in\mathcal K^n$ such that $R(K_w)=1$, $w(K_w)=w$, and $b(K_w)$ fulfills with equality the inequality above. 
\end{theorem}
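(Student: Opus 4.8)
The plan is to follow the template of the proof of \eqref{eq:brD_NEW_LOWER}: reduce $K$ to a solid of revolution whose two-dimensional axial slices are exactly the extremal planar bodies of \eqref{eq:pRw_NEW}, and then evaluate $b$ through the slicing formula \eqref{eq:b_and_p}. Since \eqref{eq:b_and_p} singles out the direction $e_n$, I would arrange for $e_n$ to be the axis of revolution, so that every projection appearing in the integral is a congruent planar copy of the extremal ``disc minus two caps'' $\mathbb B\cap\{|y|\le w/2\}$, whose perimeter is controlled by \eqref{eq:pRw_NEW}.

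First I would normalize $R(K)=1$ and center $K$ so that $K\subset\mathbb B_n$; by Proposition \ref{prop:optcont} this forces $0\in K$. After a rotation I may assume the minimal width $w:=w(K)$ is attained in the direction $e_n$, so that $K$ lies between the hyperplanes $x_n=-a$ and $x_n=w-a$ for some $a$ with $0\le a\le w$ and (since the slab supports $K\subset\mathbb B_n$) with $w-a\le 1$; reflecting in $x_n$ if necessary I take $a\le w/2$. Enclosing $K$ in
\[
K':=\mathbb B_n\cap\{x\in\mathbb R^n:\ -a\le x_n\le w-a\},
\]
the monotonicity of the mean width under inclusion gives $b(K)\le b(K')$, while $R(K')=1$ and $w(K')=w$.

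The key observation is that $K'$ is a solid of revolution about the $e_n$-axis, hence invariant under every reflection across a $2$-plane $L=\mathrm{lin}(\{v,e_n\})$, $v\in\mathbb S^{n-2}$; by convexity this yields $P_L(K')=K'\cap L$, and each such slice is a congruent copy of the planar body $\mathbb B\cap\{-a\le y\le w-a\}$ analyzed in the proof of \eqref{eq:pRw_NEW}. Applying \eqref{eq:b_and_p} to $K'$, the integrand $p(P_L(K'))$ is therefore independent of $v$, so the integral collapses to
\[
b(K')=\frac{1}{2\pi}\,p\!\left(\mathbb B\cap\{-a\le y\le w-a\}\right)=\frac{1}{2\pi}\,f_w(a),
\]
with $f_w$ exactly the function appearing in the proof of \eqref{eq:pRw_NEW}. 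Since that proof shows $f_w$ to be nondecreasing on $[0,w/2]$, and hence maximized at $a=w/2$, I obtain
\[
b(K)\le b(K')\le\frac{1}{2\pi}f_w\!\left(\tfrac w2\right)=\frac1\pi\left(\pi+2\sqrt{1-\tfrac{w^2}{4}}-2\arccos\tfrac w2\right).
\]
Undoing the normalization by applying this to $K/R(K)$ and using the $1$-homogeneity of $b$ and $w$ yields the stated inequality. For sharpness, the body $K_w:=\mathbb B_n\cap\{|x_n|\le w/2\}$ has $R(K_w)=1$, $w(K_w)=w$, and its axial slices are the extremal planar slabs, so it attains equality for every $w\in[0,2]$.

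The main obstacle, and the reason the reduction to $K'$ is essential, is the behaviour of width under projection: for a general $K$ one only has $w(P_L(K))\ge w(K)$ (directional widths are preserved by projection onto planes containing the direction, and minimizing over fewer directions can only raise the minimum), whereas the right-hand side of \eqref{eq:pRw_NEW} is increasing in the width. A plane-by-plane application of \eqref{eq:pRw_NEW} to the projections of $K$ would therefore point the wrong way. Passing first to the rotationally symmetric $K'$ circumvents this, since every axial slice of $K'$ has width exactly $w$; the remaining work is the purely planar monotonicity of $f_w$, already available from the proof of \eqref{eq:pRw_NEW}.
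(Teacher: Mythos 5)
Your proof is correct and follows essentially the same route as the paper: both pass to the rotationally symmetric body $K'=\mathbb B_n\cap\{x\in\mathbb R^n:-a\le x_n\le w-a\}$, use \eqref{eq:b_and_p} together with $P_{\mathrm{lin}(\{v,e_n\})}(K')=K'\cap \mathrm{lin}(\{v,e_n\})$ to reduce $b(K')$ to the common perimeter of the planar axial slices, and conclude with the planar bound at $a=w/2$ (you inline the monotonicity of $f_w$ from the proof of \eqref{eq:pRw_NEW}, while the paper invokes \eqref{eq:pRw_NEW} applied to the slice --- the same computation). Your closing remark on why \eqref{eq:pRw_NEW} cannot be applied directly to the projections of $K$ (widths only increase under projection while the bound is increasing in $w$) is a correct clarification of why the reduction to $K'$ is needed, but it does not alter the argument.
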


\begin{proof}
After a suitable dilatation and rigid motion, we can assume that $R(K)=1$ with $K\subset\mathbb B_n$, and such that the hyperplanes $x_n=-a$ and $x_n=w-a$ support $K$, where $0\leq a\leq w-a\leq 1$ with $w:=w(K)$, i.e.
\[
\text{if }w\in[0,1]\text{ then }a\in[0,w/2]\quad\text{and if }w\in[1,2]\text{ then }a\in[w-1,w/2].
\]
In particular $K\subset \mathbb B_n\cap\{(x_1,\dots,x_n)\in\mathbb R^n:-a\leq x_n\leq w-a\}=:K'$, and thus $b(K)\leq b(K')$. Evidently, $R(K')=R(K)=1$ and $w(K')=w(K)$. 

Using \eqref{eq:b_and_p} and \eqref{eq:pRw_NEW} within $\mathrm{lin}(\{v,e_n\})$ for every $v\in\mathbb S^{n-2}$ we obtain that
\[
\begin{split}
 b(K) & \leq b(K') = \frac{1}{2\pi S(\mathbb B_{n-1})}  \int_{\mathbb S^{n-2}} p(K'\cap\mathrm{lin}(\{v,e_n\}))d\sigma_{n-2}(v)\\
& \leq \frac1\pi \left(\pi +2\sqrt{1-\frac{w(K'\cap\mathrm{lin}(\{v,e_n\}))^2}{4}}-2\arccos\left(\frac{w(K'\cap\mathrm{lin}(\{v,e_n\}))}{2}\right)\right) \\
& = \frac1\pi \left(\pi +2\sqrt{1-\frac{w(K)}{4}}-2\arccos\left(\frac{w(K)}{2}\right)\right).
\end{split}
\]
where $P_{\mathrm{lin}(\{v,e_n\})}(K')=K'\cap \mathrm{lin}(\{v,e_n\})$ and $w(K')=w(K'\cap\mathrm{lin}(\{v,e_n\})$, for every $v\in\mathbb S^{n-2}$. In order to obtain the inequality, one simply needs to replace above $K$ by $K/R(K)$ and use the $1$-homogeneity of the functionals $b$ and $w$.

Evidently, equality holds whenever $K=\mathbb B_n\cap\{(x_1,\dots,x_n)\in\mathbb R^n:|x_n|\leq w/2\}$, for every $w\in[0,2]$.
\end{proof}

Let us remember a particular instance of the \emph{Eulerian hypergeometric function}
\[
H\left(\frac12,\frac{1-n}{2};\frac32;a^2\right)=\frac1a\int_{0}^a (1-t^2)^\frac{n-1}{2}dt
\]
for every $n\in\mathbb N$ and $a\in(0,1]$ (see \cite{EMOT}).

The following result is an extension of \eqref{eq:ARw_NEW_UPPER} to higher dimensions when replacing $A(K)$ by $V(K)$.

\begin{theorem}\label{thm:VRw_new_ndim}
    Let $K\in\mathcal K^n$. Then
    \begin{equation}\label{eq:VRw_new_ndim}
    V(K) \leq \frac{\pi^\frac{n-1}{2}}{\Gamma\left(\frac{n+1}{2}\right)} w(K)R(K)^{n-1} H\left(\frac12,\frac{1-n}2;\frac32;\frac{w(K)^2}{4R(K)^2}\right).
    \end{equation}
    Moreover, for every $w\in[0,2]$, there exists $K_w\in\mathcal K^n$ such that $R(K_w)=1$, $w(K_w)=w$, and $V(K_w)$ fulfills \eqref{eq:VRw_new_ndim} with equality.
\end{theorem}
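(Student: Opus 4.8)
The plan is to follow the proof of \eqref{eq:ARw_NEW_UPPER} line by line, replacing the two-dimensional cross-sectional length by an $(n-1)$-dimensional cross-sectional volume; unlike the two preceding higher-dimensional extensions, which invoked the mean-width slicing formula \eqref{eq:b_and_p}, here I would compute the relevant volume directly by Fubini. After a translation and dilatation I would normalize so that $K\subset\mathbb B_n$ with $R(K)=1$, which by Proposition \ref{prop:optcont} forces $0\in K$, and after a rotation I would take the width direction to be $e_n$, so the supporting hyperplanes realizing the width are $x_n=-a$ and $x_n=w-a$ with $0\le a\le w-a\le 1$ and $w:=w(K)$, the point $0$ lying between them exactly as in the planar case. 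Then $K\subset K':=\mathbb B_n\cap\{(x_1,\dots,x_n)\in\mathbb R^n:-a\le x_n\le w-a\}$, so $V(K)\le V(K')$, with $K'$ the intersection of the unit ball with a slab of width $w$.

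Next I would compute $V(K')$ by Fubini: the slice of $\mathbb B_n$ at height $x_n=t$ is an $(n-1)$-ball of radius $\sqrt{1-t^2}$, so that
\[
V(K')=V_{n-1}(\mathbb B_{n-1})\int_{-a}^{w-a}(1-t^2)^{\frac{n-1}{2}}\,dt=:V_{n-1}(\mathbb B_{n-1})\,F_w(a).
\]
Differentiating gives $F_w'(a)=(1-a^2)^{\frac{n-1}{2}}-(1-(w-a)^2)^{\frac{n-1}{2}}$, which is nonnegative on the admissible range, since $a\le w-a$ yields $1-a^2\ge 1-(w-a)^2\ge 0$ and $s\mapsto s^{\frac{n-1}{2}}$ is nondecreasing on $[0,\infty)$. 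This is exactly the $n$-dimensional analogue of the monotonicity of $g(x)=\sqrt{1-x^2}$ used when $n=2$, and it shows that $F_w$ is maximized at $a=w/2$, i.e. the volume-maximizing slab is centered.

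Finally I would evaluate the centered integral: with $a=w/2$,
\[
F_w\left(\frac w2\right)=2\int_0^{w/2}(1-t^2)^{\frac{n-1}{2}}\,dt=w\,H\left(\frac12,\frac{1-n}{2};\frac32;\frac{w^2}{4}\right),
\]
the last equality being the stated hypergeometric identity evaluated at $w/2$. Inserting $V_{n-1}(\mathbb B_{n-1})=\pi^{\frac{n-1}{2}}/\Gamma\left(\frac{n+1}{2}\right)$ then reproduces the right-hand side of \eqref{eq:VRw_new_ndim} at $R(K)=1$, and the general case follows by applying this to $K/R(K)$ and using the $1$-homogeneity of $w$ together with the $n$-homogeneity of $V$; the extremizers are the centered slabs $K=\mathbb B_n\cap\{|x_n|\le w/2\}$. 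I expect no genuine obstacle here, since once the reduction to the centered slab is in place everything reduces to a single-variable computation; the only care needed is the bookkeeping to match the normalizing constant $\pi^{(n-1)/2}/\Gamma((n+1)/2)$ and the argument $w^2/(4R^2)$ of $H$ after the final rescaling.
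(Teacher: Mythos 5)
Your proposal is correct and follows essentially the same route as the paper's own proof: the same reduction to the slab intersection $K'$, the same Fubini computation of $V(K')$ as $V_{n-1}(\mathbb B_{n-1})\int_{-a}^{w-a}(1-t^2)^{(n-1)/2}dt$, the same monotonicity argument in $a$ to center the slab, and the same identification with the hypergeometric expression followed by rescaling. No gaps.
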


\begin{proof}
After a suitable dilatation and rigid motion, we can assume that $R(K)=1$ with $K\subset\mathbb B$, and such that the hyperplanes $x_n=-a$ and $x_n=w-a$ support $K$, where $0\leq a\leq w-a\leq 1$ with $w:=w(K)$, i.e.
\[
\text{if }w\in[0,1]\text{ then }a\in[0,w/2]\quad\text{and if }w\in[1,2]\text{ then }a\in[w-1,w/2].
\]
Above, we take the lines containing $0$ between them. This is due to Proposition \ref{prop:optcont}, since a consequence of $R(K)=1$ with $K\subset\mathbb B$ is that $0\in K$. In particular $K\subset \mathbb B\cap\{x\in\mathbb R^n:-a\leq x_n\leq w-a\}=:K'$, and thus $V(K)\leq V(K')$. 

Notice that by Fubini's principle
\[
\begin{split}
V(K') & = \int_{-a}^{w-a} V_{n-1}(K'\cap \{x\in\mathbb R^n:x_n=t\})dt \\
& = \int_{-a}^{w-a} V_{n-1}(\mathbb B_n\cap \{x\in\mathbb R^n:x_n=t\})dt = \int_{-a}^{w-a} V_{n-1}(\sqrt{1-t^2}\mathbb B_{n-1})dt\\
& = V_{n-1}(\mathbb B_{n-1})\int_{-a}^{w-a}(1-t^2)^{\frac{n-1}{2}}dt = V_{n-1}(\mathbb B_{n-1}) g_w(a).
\end{split}
\]

Since 
\[
g_w'(a)=-(1-(w-a)^2)^\frac{n-1}{2}+(1-a^2)^\frac{n-1}{2},
\]
it is very easy to show that under the conditions $0\leq a\leq w-a\leq 1$ the function $g_w'(a)$ is nonnegative. Thus 
\[
\begin{split}
V(K') & \leq V_{n-1}(\mathbb B_{n-1}) g_w\left(\frac w2\right) = V_{n-1}(\mathbb B_{n-1}) \int_{-w/2}^{w/2}(1-t^2)^\frac{n-1}{2}dt \\
& = V_{n-1}(\mathbb B_{n-1}) w H(1/2,(1-n)/2;3/2;w^2/4).
\end{split}
\]

In order to obtain \eqref{eq:VRw_new_ndim}, we simply apply the inequality above to the set $K/R(K)$ and use the $n$-homogeneity of $V_n$ and the $1$-homogeneity of $w$.

Evidently, equality holds for every $K=\mathbb B_n\cap\{x\in\mathbb R^n:|x_n|\leq w/2\}$, for every $w\in[0,2]$.
\end{proof}

The right-hand side expression on Theorem \ref{thm:VRw_new_ndim} can be explicitly computed avoiding the use of the hypergeometric function.

\begin{remark}
    If for every $n\in\mathbb N$ we denote by 
    \[
    I_n:=\int (1-t^2)^\frac{n-1}{2}dt,
    \]
    it is well-known that $I_n$ can be computed recursively in terms of $I_{n-2}$ when integrating by parts. One can derive that
    \[
    \begin{split}
    & I_2 =\frac{1}{2}(\arcsin t+t\sqrt{1-t^2})+c_2,\quad I_3=t-\frac{t^3}{3}+c_3, \\
    & I_4=\frac18(3\arcsin t+t\sqrt{1-t^2}(5-2t^2))+c_4,
    \end{split}
    \]
    for absolute constants $c_i\geq 0$, $i\geq 2$, and so on. In particular, Theorem \ref{thm:VRw_new_ndim} is equivalent to \eqref{eq:ARw_NEW_UPPER} when $n=2$; if $K\in\mathcal K^3$ we would then have that
    \[
    V(K) \leq \pi \left(w(K)R(K)^2-\frac{w(K)^3}{12}\right);
    \]
    or if $K\in\mathcal K^4$, we would obtain that
    \[
    \frac{V(K)}{R(K)^4} \leq \frac{\pi}{3}\left(3\arcsin\left(\frac{w(K)}{2R(K)}\right)+\frac{w(K)}{2R(K)}\sqrt{1-\frac{w(K)^2}{4R(K)^2}}\left(5-\frac{w(K)^2}{2R(K)^2}\right)\right).
    \]
    As mentioned in Theorem \eqref{eq:VRw_new_ndim}, sets of the form $K=\mathbb B_n\cap\{x\in\mathbb R^n:|x_n|\leq w/2\}$ would attain equality above for every $n\in\mathbb N$ and $w\in[0,2]$.
\end{remark}

\end{document}